\newtheorem{theorem}{Theorem}[section]
\newtheorem{lemma}[theorem]{Lemma}
\numberwithin{equation}{section}
\newenvironment{proof}[1][Proof]{\noindent\textit{#1.} }{\hfill \rule{0.5em}{0.5em}}
\def\AA{\mathcal{A}}
\begin{document}

\title{Higher order derivative of self-intersection local time for fractional Brownian motion}
\date{\today}

\author
{ Qian Yu
\thanks{ School of Statistics, East China Normal University, Shanghai 200062, China. E-mail: qyumath@163.com.} }

\maketitle

\begin{abstract}
\noindent We consider the existence and H\"{o}lder continuity conditions for the $k$-th order derivatives of self-intersection local time for $d$-dimensional  fractional Brownian motion, where $k=(k_1,k_2,\cdots, k_d)$. Moreover, we show a limit theorem for the critical case with $H=\frac{2}{3}$ and $d=1$, which was conjectured by Jung and Markowsky \cite{Jung 2014}.
\vskip.2cm \noindent {\bf Keywords:} Self-intersection local time; Fractional Brownian motion; H\"{o}lder continuity.

\vskip.2cm \noindent {\it Subject Classification: Primary 60G22;
Secondary 60J55.}
\end{abstract}

\section{Introduction}
Fractional Brownian motion (fBm) on $\mathbb{R}^d$  with Hurst parameter $H\in(0,1)$ is a $d$-dimensional centered Gaussian process $B^H=\{B_t^H, ~t\geq0\}$ with covariance function given by
$$
\mathbb{E}\left[B^H_tB^H_s\right]=\frac{1}{2}\left[t^{2H}+s^{2H}-|t-s|^{2H}
\right].
$$
Note that $B_t^{\frac12}$ is a classical standard Brownian motion.
Let $D=\{(r,s): 0<r<s<t\}$. The self-intersection local time (SLT) of fBm was first investigated in Rosen \cite{Rosen 1987} and formally defined as
$$\alpha_t(y)=\int_{D}\delta(B^H_s-B^H_r-y)drds,$$
where $B^H$ is a fBm and $\delta$ is the Dirac delta function. It was further investigated in Hu \cite{Hu 2001}, Hu and Nualart \cite{Hu 2005}.
In particular, Hu and Nualart \cite{Hu 2005} showed its existence whenever $Hd<1$. Moreover,
$\alpha_t(y)$ is H\"{o}lder continuous in time of any order strictly less than $1-H$ which can be derived from Xiao \cite{Xiao 1997}.

The derivative of self-intersection local time (DSLT) for fBm was first considered in the works by Yan et al. \cite{Yan 2008} \cite{Yan 2009}, where the ideas  were based on Rosen \cite{Rosen2005}. The DSLT for fBm has two versions. One is extended by the Tanaka formula (see in Jung and  Markowsky \cite{Jung 2014}):
$$\widetilde{\alpha}'_t(y)=-H\int_{D}\delta'(B^H_s-B^H_r-y)(s-r)^{2H-1}drds.$$

The other is from the occupation-time formula (see Jung and  Markowsky \cite{Jung 2015}):
$$
\widehat{\alpha}'_t(y)=-\int_{D}\delta'(B^H_s-B^H_r-y)drds.
$$

Motivated by the $1$st order DSLT for fBm in Jung and Markowsky \cite{Jung 2015} and the $k$-th order derivative of intersection local time (ILT) for fBm  in Guo et al. \cite{Guo 2017}, we will consider the following $k$-th order DSLT  for fBm in this paper,

\begin{align*}
\widehat{\alpha}^{(k)}(y)&=\frac{\partial^k}{\partial y_1^{k_1}\cdots \partial y_d^{k_d}}\int_{D}\delta(B^H_s-B^H_r-y)drds\\
&=(-1)^{|k|}\int_{D}\delta^{(k)}(B^H_s-B^H_r-y)drds,
\end{align*}
where $k=(k_1,\cdots,k_d)$ is a multi-index with all $k_i$ being nonnegative integers and $|k|=k_1+k_2+\cdots+k_d$, $\delta$ is the Dirac delta function of $d$ variables and $\delta^{(k)}(y)=\frac{\partial^k}{\partial y_1^{k_1}\cdots \partial y_d^{k_d}}\delta(y)$ is the $k$-th order partial derivative of $\delta$.

Set
$$f_\varepsilon(x)=\frac1{(2\pi\varepsilon)^{\frac d2}}e^{-\frac{|x|^2}{2\varepsilon}}=\frac1{(2\pi)^d}\int_{\mathbb{R}^d}e^{i\langle p,x\rangle}e^{-\varepsilon \frac{|p|^2}{2}}dp,$$
where $\langle p,x\rangle=\sum_{j=1}^dp_jx_j$ and $|p|^2=\sum_{j=1}^dp_j^2$.

Since the Dirac delta function $\delta$ can be approximated by $f_\varepsilon(x)$, we approximate $\delta^{(k)}$ and $\widehat{\alpha}_t^{(k)}(y)$ by
$$f^{(k)}_\varepsilon(x)=\frac{i^{|k|}}{(2\pi)^d}\int_{\mathbb{R}^d}p_1^{k_1}\cdots p_d^{k_d}e^{i\langle p,x\rangle}e^{-\varepsilon \frac{|p|^2}{2}}dp$$
and
\begin{equation}\label{sec1-eq1.2}
\widehat{\alpha}^{(k)}_{t,\varepsilon}(y)=(-1)^{|k|}\int_{D}f^{(k)}_\varepsilon(B^H_s-B^H_r-y)drds,
\end{equation}
respectively.

If $\widehat{\alpha}^{(k)}_{t,\varepsilon}(y)$ converges to a random variable in $L^p$ as $\varepsilon\to0$, we denote the limit by $\widehat{\alpha}_t^{(k)}(y)$ and call it the $k$-th DSLT of $B^H$.

\begin{theorem} \label{sec1-th L2}
For $0<H<1$ and $\widehat{\alpha}^{(k)}_{t,\varepsilon}(y)$ defined in \eqref{sec1-eq1.2}, let $\#:=\#\{k_i ~is ~odd, ~i=1, 2, \cdots d\}$ denotes the odd number of $k_i$, for $i=1, 2, \cdots, d$. If $H<\min\{\frac2{2|k|+d},\frac{1}{|k|+d-\#}, \frac1d\}$ for $|k|=\sum_{j=1}^dk_j$, then  $\widehat{\alpha}^{(k)}_{t}(0)$ exists in $L^2$.

\end{theorem}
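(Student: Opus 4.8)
The plan is to show the Cauchy criterion in $L^2$: namely that $\mathbb{E}\big[\widehat{\alpha}^{(k)}_{t,\varepsilon}(0)\,\widehat{\alpha}^{(k)}_{t,\eta}(0)\big]$ converges to a finite limit as $\varepsilon,\eta\to 0$, which (together with the analogous argument for $\varepsilon=\eta$) gives $L^2$-convergence. Writing out the definition \eqref{sec1-eq1.2} and using the Fourier representation of $f^{(k)}_\varepsilon$, one obtains

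\begin{align*}
\mathbb{E}\big[\widehat{\alpha}^{(k)}_{t,\varepsilon}(0)\,\widehat{\alpha}^{(k)}_{t,\eta}(0)\big]
=\frac{(-1)^{|k|}}{(2\pi)^{2d}}\int_{D^2}\!\!\int_{\mathbb{R}^{2d}}
\Big(\prod_{j=1}^d p_j^{k_j}q_j^{k_j}\Big)\,
\mathbb{E}\Big[e^{i\langle p,\,B^H_s-B^H_r\rangle+i\langle q,\,B^H_{s'}-B^H_{r'}\rangle}\Big]\,
e^{-\varepsilon\frac{|p|^2}{2}-\eta\frac{|q|^2}{2}}\,dp\,dq\,dr\,ds\,dr'\,ds',
\end{align*}

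where $(r,s),(r',s')\in D$. Since $B^H$ is Gaussian with independent coordinates, the expectation factors over $j=1,\dots,d$ and equals $\exp\big(-\tfrac12\,(p,q)\Lambda(p,q)^{\mathsf T}\big)$, where $\Lambda=\Lambda(r,s,r',s')$ is the $2\times 2$ covariance matrix of the increment pair $(B^{H,1}_s-B^{H,1}_r,\;B^{H,1}_{s'}-B^{H,1}_{r'})$ (one coordinate, with the full $2d$-dimensional Gaussian being a block-diagonal of $d$ copies). The $\varepsilon,\eta\to 0$ limit is handled by monotone/dominated convergence once we have an integrable bound uniform in $\varepsilon,\eta$, so the whole problem reduces to showing

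\begin{equation*}
\int_{D^2}\Big(\int_{\mathbb{R}^{2d}}\Big|\prod_{j=1}^d p_j^{k_j}q_j^{k_j}\Big|\,
e^{-\frac12(p,q)\Lambda(p,q)^{\mathsf T}}\,dp\,dq\Big)\,dr\,ds\,dr'\,ds'<\infty .
\end{equation*}

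The inner Gaussian integral is computed coordinatewise: for each $j$, $\int_{\mathbb{R}^2}|p_j|^{k_j}|q_j|^{k_j}e^{-\frac12(p_j,q_j)\Lambda(p_j,q_j)^{\mathsf T}}dp_j\,dq_j$ is, up to a constant, $(\det\Lambda)^{-1/2}$ times a polynomial in the entries of $\Lambda^{-1}$ of total degree $k_j$ in each of $p_j,q_j$; because the Gaussian density integrates moments, this quantity is bounded by $C(\det\Lambda)^{-1/2}\big(\lambda_{11}\lambda_{22}/\det\Lambda\big)^{?}$ — more precisely, a homogeneous expression that behaves like $(\det\Lambda)^{-1/2-m}$ with $m$ depending on how many of the relevant moment exponents are odd. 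This is exactly where the parameter $\#$ (the number of odd $k_i$) enters: an odd moment against a centered Gaussian introduces an extra factor that must be paid for with an additional negative power of $\det\Lambda$, whereas for even $k_j$ the leading term is cheaper. Collecting the $d$ coordinates, the inner integral is bounded by a constant times $(\det\Lambda)^{-(|k|+d)/2}\cdot(\text{correction})$, the correction accounting for the $\#$ odd indices and effectively improving the exponent to $(\det\Lambda)^{-(|k|+d-\#)/2}$ after using $\lambda_{11},\lambda_{22}\lesssim$ the relevant increment variances. The remaining task is the ``local nondeterminism'' estimate: bound $\det\Lambda$ from below and $\lambda_{11},\lambda_{22}$ from above in terms of $s-r$, $s'-r'$, and the overlap of the two intervals, then integrate over $D^2$. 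Using the strong local nondeterminism of fBm one gets $\det\Lambda\gtrsim$ a product of powers of the lengths of the disjoint pieces of $[r,s]$ and $[r',s']$, and the time integral $\int_{D^2}\det\Lambda^{-(|k|+d-\#)/2}\,dr\,ds\,dr'\,ds'$ then factors through a handful of one-dimensional Beta-type integrals of the form $\int_0^t u^{-\theta}\,du$; finiteness of each requires exactly the three stated conditions $H<\tfrac{2}{2|k|+d}$, $H<\tfrac{1}{|k|+d-\#}$, and $H<\tfrac1d$, the first coming from the ``diagonal'' regime where the two intervals nearly coincide, the third being the classical condition $Hd<1$ needed even for $k=0$, and the middle one from the regime where one interval sits inside the other.

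The main obstacle I anticipate is the careful bookkeeping of the inner Gaussian integral together with the case analysis on overlapping versus disjoint time intervals: one must split $D^2$ into the regions according to the order of $r,s,r',s'$ (nested, interleaved, disjoint), and in each region parametrize by the lengths of the maximal subintervals on which only one of the two increments is ``active,'' apply strong local nondeterminism to lower-bound $\det\Lambda$ by the appropriate product of these lengths raised to power $2H$ (with the subtlety that in the nested case one of the relevant ``lengths'' is the whole inner interval, which is what degrades the exponent and produces the $\tfrac{2}{2|k|+d}$ threshold), and then verify that the resulting multiple integral converges. Tracking the exact power of $\det\Lambda$ — in particular getting the improvement from $|k|+d$ down to $|k|+d-\#$ correct — is the delicate point, since it rests on the precise structure of the Hermite-type polynomial multiplying the Gaussian in each coordinate. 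Once the power count is pinned down, the time integration is a routine (if tedious) check that each exponent stays below $1$, yielding the three inequalities in the hypothesis.
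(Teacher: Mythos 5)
Your overall route is the same as the paper's: Fourier representation, coordinatewise factorization of the Gaussian expectation, a moment bound for the resulting two-dimensional Gaussian integral in each coordinate, and a case analysis over the orderings of $r,s,r',s'$ with lower bounds on $\det\Lambda=\lambda\rho-\mu^2$. The gap is in the central estimate. The inner integral is not controlled by a single power of $\det\Lambda$: for each coordinate the paper's Lemma \ref{sec3-lem int} gives a sum of terms $|\mu|^{j}(\lambda\rho-\mu^2)^{-(k_i+1+j)/2}$ with $j=k_i,k_i-2,\dots$ down to $1$ (odd $k_i$) or $0$ (even $k_i$), so after multiplying over the $d$ coordinates the two extremal contributions are $|\mu|^{|k|}(\lambda\rho-\mu^2)^{-|k|-d/2}$ and $|\mu|^{\#}(\lambda\rho-\mu^2)^{-(|k|+d+\#)/2}$. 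The powers of the cross-covariance $|\mu|$ in the numerator cannot be traded for an ``effective exponent'' $-(|k|+d-\#)/2$ as you propose: already for $d=1$, $k=1$ your reduction reads $\int_{D^2}(\det\Lambda)^{-1/2}\,dr\,ds\,dr'\,ds'$, which is finite for every $H<1$, whereas the true threshold is $H<\tfrac23$ (sharp, by Theorem \ref{sec1-th sup} and Jaramillo--Nualart). It is precisely the term $|\mu|^{|k|}(\lambda\rho-\mu^2)^{-|k|-d/2}$, with $|\mu|^{|k|}$ bounded by $a^{2H|k|}+b^{2H|k|}+c^{2H|k|}$ in the interleaved configuration $r<r'<s<s'$, that produces the hypothesis $H<\tfrac{2}{2|k|+d}$; in fact all three thresholds already arise from that single configuration via the two numerator exponents $Q=|k|$ and $Q=\#$, not from the three separate regimes as you assign them.

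A second missing ingredient is an upper bound on $|\mu|$ in the disjoint configuration $r<s<r'<s'$. There $\det\Lambda\gtrsim (ac)^{2H}$ carries no decay in the gap $b$, and estimating the numerator trivially leads to $(ac)$-integrals requiring roughly $H(|k|+d+Q)<1$, which is stronger than the hypothesis. The paper closes this region by writing $\mu=H(2H-1)\,ac\int_0^1\int_0^1(b+au+cv)^{2H-2}\,du\,dv$ and deducing $|\mu|\le K\,(ac)^{\beta(H-1)+1}b^{2\alpha(H-1)}$ for a suitably chosen $\alpha+\beta=1$; the positive powers of $a$ and $c$ gained here are what make the integral converge. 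Your sketch invokes only lower bounds on $\det\Lambda$ and upper bounds on the variances, so this region does not close as written. Both gaps are repairable, but they are exactly where the constant $\#$ and the three inequalities in the statement come from, so they are not optional bookkeeping.
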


\begin{theorem} \label{sec1-th Lp}
If $H(|k|+d)<1$, then $\widehat{\alpha}^{(k)}_{t}(0)$ exists in $L^p$, for all $p\in(0,\infty)$.
\end{theorem}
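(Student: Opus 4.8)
\emph{Proof proposal.} The plan is to bound every even moment of the approximations \eqref{sec1-eq1.2} uniformly in $\varepsilon$, and then to combine these bounds with the $L^2$-convergence that is already at our disposal. The first thing to observe is that the hypothesis $H(|k|+d)<1$, that is $H<\frac1{|k|+d}$, implies $H<\min\{\frac2{2|k|+d},\frac1{|k|+d-\#},\frac1d\}$, because $|k|+d-\#\le |k|+d$, $|k|+d\ge d$ and $2(|k|+d)\ge 2|k|+d$; hence Theorem \ref{sec1-th L2} applies, so $\widehat\alpha^{(k)}_{t,\varepsilon}(0)$ converges in $L^2$, and a fortiori in probability, to a limit $\widehat\alpha^{(k)}_t(0)$. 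Once we establish that $\sup_{\varepsilon>0}\mathbb{E}\big[(\widehat\alpha^{(k)}_{t,\varepsilon}(0))^{2m}\big]<\infty$ for every integer $m\ge1$, the family $\{|\widehat\alpha^{(k)}_{t,\varepsilon}(0)|^{p}\}_{\varepsilon>0}$ will be uniformly integrable for every $p\in(0,\infty)$, the convergence in probability will upgrade to convergence in $L^p$, and Fatou's lemma will give $\widehat\alpha^{(k)}_t(0)\in L^p$.

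To bound the $2m$-th moment I would, for fixed $m\ge1$, insert the Fourier representation of $f^{(k)}_\varepsilon$ into \eqref{sec1-eq1.2} and take expectations, obtaining
\begin{align*}
\mathbb{E}\big[(\widehat\alpha^{(k)}_{t,\varepsilon}(0))^{2m}\big]
=\frac{(-1)^{m|k|}}{(2\pi)^{2md}}\int_{D^{2m}}\!\int_{\mathbb{R}^{2md}}\prod_{l=1}^{2m}\prod_{j=1}^{d}(p^l_j)^{k_j}\,
\mathbb{E}\Big[e^{i\sum_{l=1}^{2m}\langle p^l,\,B^H_{s_l}-B^H_{r_l}\rangle}\Big]\,
e^{-\frac{\varepsilon}{2}\sum_{l=1}^{2m}|p^l|^2}\,dp\,dr\,ds .
\end{align*}
Since the $d$ coordinates of $B^H$ are independent, the Gaussian expectation factors as $\prod_{j=1}^{d}\exp(-\tfrac12\langle\Lambda_j u_j,u_j\rangle)$ with $u_j=(p^1_j,\dots,p^{2m}_j)$ and $\Lambda_j=\Lambda_j(r,s)$ the covariance matrix of $(B^{H,j}_{s_1}-B^{H,j}_{r_1},\dots,B^{H,j}_{s_{2m}}-B^{H,j}_{r_{2m}})$. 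Bounding $e^{-\frac\varepsilon2\sum|p^l|^2}\le1$ (which only enlarges the integral and makes the bound independent of $\varepsilon$) and performing the Gaussian integral in each block $u_j$ should lead to
$$\mathbb{E}\big[(\widehat\alpha^{(k)}_{t,\varepsilon}(0))^{2m}\big]\le C_{m,k}\int_{D^{2m}}\prod_{j=1}^{d}\frac{1}{\sqrt{\det\Lambda_j}}\;\mathbb{E}\Big[\prod_{l=1}^{2m}|Z^j_l|^{k_j}\Big]\,dr\,ds ,$$
where $(Z^j_1,\dots,Z^j_{2m})\sim N(0,\Lambda_j^{-1})$; the generalized H\"older inequality together with the formula for Gaussian absolute moments then gives $\mathbb{E}\big[\prod_l|Z^j_l|^{k_j}\big]\le C_{m,k}\prod_{l=1}^{2m}\big((\Lambda_j^{-1})_{ll}\big)^{k_j/2}$.

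What remains is to estimate the integral over $D^{2m}$, and this is where the strong local nondeterminism of fractional Brownian motion will do the work. I would partition $D^{2m}$ according to the ordering of the $4m$ endpoints $\{r_l,s_l\}$, write each increment as a sum of increments over the consecutive subintervals it covers, and invoke the standard determinant and conditional-variance estimates: on each piece of the partition there is $c=c(H,m)>0$ with $\det\Lambda_j\ge c\prod_{l=1}^{2m}\sigma^2_{j,l}$ and $(\Lambda_j^{-1})_{ll}\le c^{-1}\sigma^{-2}_{j,l}$, where $\sigma^2_{j,l}$, the conditional variance of the $l$-th increment given the rest, is comparable to the $2H$-th power of the length of an appropriate subinterval. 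Collecting exponents, the integrand should be dominated on each piece by a constant times $\prod_{l=1}^{2m}(s_l-r_l)^{-H\sum_{j=1}^d(k_j+1)}=\prod_{l=1}^{2m}(s_l-r_l)^{-H(|k|+d)}$ (after a suitable relabelling of the pairs), which is integrable over $D^{2m}\subset[0,t]^{4m}$ precisely because $H(|k|+d)<1$, giving the desired $\varepsilon$-uniform bound. The hard part will be exactly this last step: the derivative weights $p^{k}$ are converted, through the off-diagonal entries of $\Lambda_j$, into the inverse conditional variances $(\Lambda_j^{-1})_{ll}$, and one must verify, uniformly over all orderings of the $4m$ points and over all the ways the $2m$ intervals can overlap, that the singularities produced near $\{r_l=s_l\}$ and near coincidences of distinct intervals always combine to the single exponent $H(|k|+d)$ and no worse.
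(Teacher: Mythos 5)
Your overall strategy (uniform bounds on all even moments via the Fourier representation and local nondeterminism, then uniform integrability plus the convergence in probability inherited from Theorem \ref{sec1-th L2}) is sound in outline, and your reduction of the hypotheses of Theorem \ref{sec1-th L2} to $H(|k|+d)<1$ is correct. The paper's own proof also reduces to bounding $\int_{D^n}\int_{\mathbb{R}^{nd}}\prod_j|p_j^k|\,\mathbb{E}[\prod_j e^{i\langle p_j,B^H_{s_j}-B^H_{r_j}\rangle}]\,dp\,dr\,ds$. But the step you defer as ``the hard part'' is precisely where your argument breaks, and the dominating function you propose is not correct. The singularities of the integrand do not live on the hyperplanes $\{s_l=r_l\}$ alone: they live on the consecutive gaps of the \emph{ordered} $2n$ endpoints. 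Already for $n=2$, $d=1$, $k=0$, your bound would read $(\det\Lambda)^{-1/2}\le C\,(s_1-r_1)^{-H}(s_2-r_2)^{-H}$, whereas $\det\Lambda=\lambda\rho-\mu^2$ tends to $0$ as the two intervals merge ($r_2\to r_1$, $s_2\to s_1$) while $(s_1-r_1)(s_2-r_2)$ stays bounded away from $0$; Lemma \ref{sec3-lem3.2} (case (ii)) shows $\lambda\rho-\mu^2\asymp b^{2H}(a^{2H}+c^{2H})$ there, which no product of interval lengths can dominate. The same objection applies to your claim $(\Lambda^{-1})_{ll}\le c^{-1}\sigma_{j,l}^{-2}$ with $\sigma_{j,l}^2$ ``comparable to the $2H$-th power of the length of an appropriate subinterval'': when the $l$-th interval is covered by the union of the others, its conditional variance given the rest is governed by the flanking gaps, not by $s_l-r_l$. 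So a bound of the form $\prod_{l=1}^{2m}(s_l-r_l)^{-H(|k|+d)}$ is false, and the exponent bookkeeping is off as well: the correct bound carries $2n-1$ gap factors, $n$ of them with exponent $Hd+H|k|$ and $n-1$ with exponent $H|k|$, not $2m$ factors with exponent $H(|k|+d)$.

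What the paper does to make this work, and what is missing from your proposal, is the sample-configuration device of Jung and Markowsky: after ordering the $2n$ endpoints as $l_1\le\cdots\le l_{2n}$ one rewrites $\sum_j\langle p_j,B^H_{s_j}-B^H_{r_j}\rangle=\sum_j\langle u_j,B^H_{l_{j+1}}-B^H_{l_j}\rangle$, applies local nondeterminism to the \emph{disjoint} increments to get $e^{-c\sum_j|u_j|^2(l_{j+1}-l_j)^{2H}}$, and, crucially, splits each derivative weight as $|p_j^k|=|(u_{j_1}-u_{j_1-1})^{k/2}|\,|(u_{j_2-1}-u_{j_2})^{k/2}|$ using the \emph{two} representations of $p_j$ as a difference of consecutive $u$'s. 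This guarantees that after expanding, each $u_j$ carries at most the power $k_i$ in coordinate $i$, so that each gap contributes a singularity of order at most $(l_{j+1}-l_j)^{-Hd-H|k|}$, integrable exactly when $H(|k|+d)<1$. Your alternative of integrating the Gaussian exactly and invoking generalized H\"older on $\mathbb{E}\prod_l|Z_l^j|^{k_j}$ does distribute the weights evenly in a similar spirit, and it may well be salvageable; but it then requires uniform two-sided estimates on $\det\Lambda$ and on all diagonal entries of $\Lambda^{-1}$ in terms of the ordered gaps, over every one of the $(4m)!$ configurations, and that is not a ``standard'' fact one can cite -- it is the content of the argument. As written, the proposal asserts the conclusion of this step rather than proving it, and asserts it in a form that is not true.
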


Note that, if $d=1$ and $|k|=1$, the condition for the existence of $\widehat{\alpha}^{(k)}_{t}(y)$ in Theorems \ref{sec1-th L2} and  \ref{sec1-th Lp} are consistent with that in Jung and Markowsky \cite{Jung 2015}. If $d=2$ and $|k|=1$ in Theorems \ref{sec1-th L2}, we can see $H<\frac12$ is the best possible, since a limit theorem for threshold $H=\frac12$ studied in Markowsky \cite{Mar2008}.

\begin{theorem} \label{sec1-th H}
Assume that $H(|k|+d)<1$ and $t, \tilde{t}\in[0,T]$. Then
$\widehat{\alpha}^{(k)}_{t}(y)$ is H\"{o}lder continuous in $y$ of any order strictly less than $\min(1,\frac{1-Hd-H|k|}{H})$
and H\"{o}lder continuous in $t$ of any order strictly  less than $1-H|k|-Hd$,
\begin{equation}\label{sec1-eq1.3}
\left|\mathbb{E}\Big[\Big(\widehat{\alpha}^{(k)}_{t}(x)-\widehat{\alpha}^{(k)}_{t}(y)\Big)^n\Big]\right|\leq C |x-y|^{n\lambda},
\end{equation}
where $\lambda<\min(1,\frac{1-Hd-H|k|}{H})$ and
\begin{equation}\label{sec1-eq1.4}
\left|\mathbb{E}\Big[\Big(\widehat{\alpha}^{(k)}_{t}(y)-\widehat{\alpha}^{(k)}_{\tilde{t}}(y)\Big)^n\Big]\right|\leq C |t-\tilde{t}|^{n\beta},
\end{equation}
where $\beta<1-H|k|-Hd$.

\end{theorem}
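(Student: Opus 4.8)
\emph{Proof plan.} The plan is to reduce both estimates to uniform-in-$\varepsilon$ bounds for the smooth approximations, and then run a Fourier-analytic moment computation driven by the local nondeterminism of fBm. By Theorem~\ref{sec1-th Lp}, under the standing hypothesis $H(|k|+d)<1$ we have $\widehat\alpha^{(k)}_{t,\varepsilon}(y)\to\widehat\alpha^{(k)}_t(y)$ in $L^p$ for every $p$, so the moments in \eqref{sec1-eq1.3} and \eqref{sec1-eq1.4} are limits of the corresponding moments of $\widehat\alpha^{(k)}_{t,\varepsilon}$; hence it is enough to prove both inequalities for $\widehat\alpha^{(k)}_{t,\varepsilon}$ with a constant $C$ independent of $\varepsilon$ and then let $\varepsilon\to0$. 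The asserted H\"older continuity in $y$ (a $d$-dimensional parameter) and in $t$ (one-dimensional) will then follow from the Kolmogorov--Chentsov criterion applied to \eqref{sec1-eq1.3}--\eqref{sec1-eq1.4} with $n$ an arbitrarily large even integer. For the moment bounds I would start from the Fourier representation of $f^{(k)}_\varepsilon$: with $U_\ell:=B^H_{s_\ell}-B^H_{r_\ell}$, expanding the $n$-th power of the relevant difference, applying Fubini, and using the independence of the $d$ coordinates of $B^H$, one obtains per coordinate the Gaussian characteristic function $\exp\big(-\tfrac12\mathrm{Var}(\sum_\ell p^\ell_j\widetilde U_\ell)\big)$, where $\widetilde U_\ell=\widetilde B^H_{s_\ell}-\widetilde B^H_{r_\ell}$ for a one-dimensional fBm $\widetilde B^H$, so that
\begin{align*}
\Big|\mathbb{E}\big[(\widehat\alpha^{(k)}_{t,\varepsilon}(x)-\widehat\alpha^{(k)}_{t,\varepsilon}(y))^n\big]\Big|
\le&\;\frac{1}{(2\pi)^{nd}}\int_{D^n}\!\int_{\mathbb{R}^{nd}}\prod_{\ell=1}^n\Big(\prod_{j=1}^d|p^\ell_j|^{k_j}\Big)\big|e^{-i\langle p^\ell,x\rangle}-e^{-i\langle p^\ell,y\rangle}\big|\\
&\times\prod_{j=1}^d\exp\Big(-\tfrac12\mathrm{Var}\big(\textstyle\sum_\ell p^\ell_j\widetilde U_\ell\big)\Big)\,dp\,dr\,ds .
\end{align*}

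To prove \eqref{sec1-eq1.3} I would bound $|e^{-i\langle p^\ell,x\rangle}-e^{-i\langle p^\ell,y\rangle}|\le 2^{1-\gamma}|p^\ell|^{\gamma}|x-y|^{\gamma}$ for $\gamma\in(0,1]$, which pulls out the factor $|x-y|^{n\gamma}$ and leaves the extra weight $\prod_\ell|p^\ell|^{\gamma}$. The heart of the argument is then to bound $\int_{D^n}\!\int_{\mathbb{R}^{nd}}\prod_\ell(\prod_j|p^\ell_j|^{k_j})|p^\ell|^{\gamma}\prod_j\exp(-\tfrac12\mathrm{Var}(\sum_\ell p^\ell_j\widetilde U_\ell))\,dp\,dr\,ds$. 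I would do this by ordering the $2n$ endpoints $r_1,s_1,\dots,r_n,s_n$, writing each $\widetilde U_\ell$ as a sum of increments of $\widetilde B^H$ over the intervening gaps, and invoking the strong local nondeterminism of fBm to bound each $\mathrm{Var}(\sum_\ell p^\ell_j\widetilde U_\ell)$ below by $c_H$ times a positive quadratic form in $(p^1_j,\dots,p^n_j)$ in which the gap lengths enter to the power $2H$. A suitable linear change of variables in each of the $d$ frequency blocks, triangular with respect to the ordering of the right endpoints $s_\ell$ so as to keep the polynomial prefactors $\prod_\ell|p^\ell_j|^{k_j}$ under control, then reduces the $p$-integral to a product of one-dimensional Gaussian moments and leaves a bound of the shape $C\int_{\{0<r_1<s_1<\cdots\}}\prod(\text{gap lengths})^{-(\text{powers})}\,dr\,ds$ whose total negative degree equals $Hn(d+|k|+\gamma)$; this is finite exactly when $H(d+|k|+\gamma)<1$. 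Taking $\gamma$ arbitrarily close to $\min\{1,(1-Hd-H|k|)/H\}$ then yields \eqref{sec1-eq1.3} for every $\lambda$ below that threshold.

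For \eqref{sec1-eq1.4} I would assume $\tilde t<t$ without loss of generality and write $\widehat\alpha^{(k)}_{t,\varepsilon}(y)-\widehat\alpha^{(k)}_{\tilde t,\varepsilon}(y)=(-1)^{|k|}\int_A f^{(k)}_\varepsilon(B^H_s-B^H_r-y)\,dr\,ds$, where $A=\{0<r<s<t:\ s>\tilde t\}$. Running the same Fourier/local-nondeterminism machinery, now with no $|x-y|^{\gamma}$ factor, reduces matters to the $n$-fold time integral produced by the $p$-integration, restricted to $A^n$; its one-copy model is $\int_A(s-r)^{-a}\,dr\,ds$ with $a=H(d+|k|)\in(0,1)$. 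The elementary concavity inequality $u^{1-a}-v^{1-a}\le(u-v)^{1-a}$, valid for $0\le v\le u$, gives $\int_A(s-r)^{-a}\,dr\,ds\le C(t-\tilde t)^{1-a}$, and its $n$-fold refinement (conceding an arbitrarily small loss in the exponent to absorb the off-diagonal gaps) yields $C(t-\tilde t)^{n\beta}$ for any $\beta<1-H|k|-Hd$. Letting $\varepsilon\to0$ and invoking Kolmogorov--Chentsov then completes the proof of the two H\"older statements.

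The step I expect to be the main obstacle is the $p$-integration under the local-nondeterminism lower bound: one must carry the prefactors $\prod_\ell\prod_j|p^\ell_j|^{k_j}$ (and, for \eqref{sec1-eq1.3}, the weights $\prod_\ell|p^\ell|^{\gamma}$) through the change of variables without letting a negative power $\ge 1$ accumulate on any single gap length. This is the delicate bookkeeping already present in the analogous derivative-of-local-time estimates of Hu--Nualart \cite{Hu 2005} and Guo et al.\ \cite{Guo 2017}; once the estimate is in product form over gap lengths, the integrability thresholds $H(d+|k|)<1$ and $H(d+|k|+\gamma)<1$, and hence the exponents $1-H|k|-Hd$ and $\min\{1,(1-Hd-H|k|)/H\}$, drop out.
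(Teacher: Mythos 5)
Your proposal is correct and follows essentially the same route as the paper: the spatial increment is handled by the bound $|e^{-i\langle p,x\rangle}-e^{-i\langle p,y\rangle}|\le C|p|^{\lambda}|x-y|^{\lambda}$, which shifts the multi-index from $k$ to $k+\tilde{\lambda}$ and is then fed back into the machinery of Theorem \ref{sec1-th Lp} (sample configuration, local nondeterminism, triangular change of variables), giving finiteness precisely when $H(d+|k|+\lambda)<1$. The only minor divergence is in \eqref{sec1-eq1.4}, where the paper extracts $|t-\tilde{t}|^{n\beta}$ by applying H\"older's inequality with exponents $1/\beta$ and $1/(1-\beta)$ to the indicators $1_{[t,\tilde{t}]}(s_j)$ -- a clean implementation of the ``arbitrarily small loss in the exponent'' that you propose to achieve via the concavity estimate on the one-copy model.
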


Note that, if $d=1$ and $k=1$. The results of \eqref{sec1-eq1.3} and \eqref{sec1-eq1.4} in Theorem \ref{sec1-th H} are consistent with the results in Jung and Markowsky \cite{Jung 2015}. When $d=1$ and $k=0$, the corresponding H\"{o}lder continuous in time of any order less than $1-H$, which is the condition obtained in Xiao \cite{Xiao 1997}. Moreover, we believe that our methodology also works well for $k$-th order DSLT of stochastic differential equation (SDE) driven by fBm, if the solution of SDE driven by fBm satisfy the property of local nondeterminism. For example, the special linear SDE, i.e. fractional Ornstein-Uhlenbeck processes.

Jung and Markowsky \cite{Jung 2014} proved that $\widehat{\alpha}^{(k)}_{t}(0)$ exists in $L^2$ for $d=1$ and $k=1$ with $0<H<2/3$, and conjectured that for the case $H>2/3$, $\varepsilon^{-\gamma(H)}\widehat{\alpha}^{'}_{t,\varepsilon}(0)$ converges in law to a Gaussian distribution for some suitable constant $\gamma(H)>0$, and at the critical point $H=\frac23$, the variable $\log(\frac1\varepsilon)^{-\gamma}\widehat{\alpha}^{'}_{t,\varepsilon}(0)$ converges in law to a Gaussian distribution for some $\gamma>0$. Later, Jaramillo and Nualart \cite{Jaramillo 2017}  proved the case of $H>2/3$ as
$$\varepsilon^{\frac32-\frac1H}\widehat{\alpha}^{'}_{t,\varepsilon}(0)\overset{law}{\to}N(0,\sigma_0^2), ~~\varepsilon\to0.$$

 By the proof of Lemma \ref{sec3-lem int} in Section 2, we can see the multinomial terms $(p_{i1}-\frac{\mu p_{i2}}{\rho})^{k_i}$ for $i=1,2,...,d$, are taken into account. But we are not sure if $k_i$ is odd or even, there are many difficulties in the integral of $\int(p_{i1}-\frac{\mu p_{i2}}{\rho})^{k_i}e^{-\frac{\rho p^2_{i1}}{2}}dp_{i1}$, thus we only consider the case $d=1$ and $k=1$ below.

Inspired by the results conjectured in \cite{Jung 2014} and the functional limit theorem for SLT of fBm given in Jaramillo and Nualart \cite{Jaramillo 2019}. We will show a limit theorem of the  critical case $H=\frac{2}{3}$.

\begin{theorem} \label{sec1-th sup}
For $\widehat{\alpha}^{(k)}_{t,\varepsilon}(y)$  defined in \eqref{sec1-eq1.2} with $y=0$. Suppose that $H=\frac{2}{3}$, $d=1$ and $k=1$,
then as $\varepsilon\to0$, we have
$$
\Big(\log1/\varepsilon\Big)^{-1}\widehat{\alpha}^{'}_{t,\varepsilon}(0)\overset{law}{\to}N(0,\sigma^2),
$$
where
$\sigma^2=\frac{t^{\frac43}}{8\pi }B(2,1/3)$ and $B(\cdot,\cdot)$ is a Beta function.
\end{theorem}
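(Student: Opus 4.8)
The plan is to isolate the first Wiener chaos as the carrier of the Gaussian fluctuation and to show that everything orthogonal to it is negligible after normalization. Since $f_\varepsilon$ is even, $f'_\varepsilon$ is odd, so $\widehat{\alpha}'_{t,\varepsilon}(0)$ is centered and its Wiener chaos expansion contains only chaoses of odd order; write $\widehat{\alpha}'_{t,\varepsilon}(0)=F_{1,\varepsilon}+R_\varepsilon$, where $F_{1,\varepsilon}$ is the projection onto the first chaos and $R_\varepsilon$ collects the chaoses of order $\geq 3$. Expanding $f'_\varepsilon(B^H_s-B^H_r)$ in Hermite polynomials of $(B^H_s-B^H_r)/(s-r)^H$ and using $\mathbb{E}[f''_\varepsilon(Z)]=-\tfrac1{\sqrt{2\pi}}(\sigma^2+\varepsilon)^{-3/2}$ for $Z\sim N(0,\sigma^2)$ (equivalently Stein's identity $J_1(g(X))=\mathbb{E}[g'(X)]\,X$), the degree-one term of $-f'_\varepsilon(B^H_s-B^H_r)$ is $\tfrac1{\sqrt{2\pi}}((s-r)^{2H}+\varepsilon)^{-3/2}(B^H_s-B^H_r)$, so
\[
F_{1,\varepsilon}=\frac1{\sqrt{2\pi}}\int_D\frac{B^H_s-B^H_r}{\big((s-r)^{2H}+\varepsilon\big)^{3/2}}\,dr\,ds ,
\]
a centered Gaussian random variable. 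It therefore suffices to prove (i) $(\log1/\varepsilon)^{-2}\,\mathrm{Var}(F_{1,\varepsilon})\to\sigma^2$ and (ii) $(\log1/\varepsilon)^{-2}\,\mathbb{E}[R_\varepsilon^2]\to0$: then (i) gives $(\log1/\varepsilon)^{-1}F_{1,\varepsilon}\overset{law}{\to}N(0,\sigma^2)$ since a centered Gaussian is determined by its variance, (ii) gives $(\log1/\varepsilon)^{-1}R_\varepsilon\to0$ in $L^2$, and Slutsky's theorem concludes.

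For step (i) I would start from
\[
\mathrm{Var}(F_{1,\varepsilon})=\frac1{2\pi}\int_D\int_D\frac{\mathbb{E}\big[(B^H_s-B^H_r)(B^H_{s'}-B^H_{r'})\big]}{\big((s-r)^{2H}+\varepsilon\big)^{3/2}\big((s'-r')^{2H}+\varepsilon\big)^{3/2}}\,dr\,ds\,dr'\,ds'
\]
and perform the position integrations first. With $u=s-r$, $u'=s'-r'$, $w=r'-r$ (Jacobian one), the numerator equals $\tfrac12\big(|u-w|^{2H}+|u'+w|^{2H}-|u-u'-w|^{2H}-|w|^{2H}\big)$, which for $|w|\gg\max(u,u')$ expands as $H(2H-1)\,uu'\,|w|^{2H-2}$ plus lower order terms; integrating against $dr$ (an interval of length $\approx t-|w|$) and over $w\in(-t,t)$ produces $2H(2H-1)\,uu'\,t^{2H}B(2,2H-1)$, while the region $|w|\lesssim\max(u,u')$ only contributes $O(\log1/\varepsilon)$ after the remaining length integrations. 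At $H=\tfrac23$ this gives
\[
\mathrm{Var}(F_{1,\varepsilon})=\frac1{2\pi}\cdot\frac49\,t^{4/3}B(2,\tfrac13)\Big(\int_0^\delta\frac{u\,du}{(u^{4/3}+\varepsilon)^{3/2}}\Big)^2+o\big((\log1/\varepsilon)^2\big),
\]
and since $\int_0^\delta u(u^{4/3}+\varepsilon)^{-3/2}\,du=\tfrac34\log(1/\varepsilon)+O(1)$ (the crossover being at $u\sim\varepsilon^{3/4}$), one obtains $\mathrm{Var}(F_{1,\varepsilon})\sim\frac1{2\pi}\cdot\frac49\cdot\frac9{16}\,t^{4/3}B(2,\tfrac13)(\log1/\varepsilon)^2=\frac{t^{4/3}}{8\pi}B(2,\tfrac13)(\log1/\varepsilon)^2=\sigma^2(\log1/\varepsilon)^2$. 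Each heuristic above has to be turned into a uniform-in-$\varepsilon$ estimate by splitting $D\times D$ into the region where $u\vee u'$ is bounded below (an $O(1)$ contribution) and the small-length region, and by tracking the error terms in the expansion of the increment covariance.

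For step (ii) I would use the closed form $\mathbb{E}\big[(\widehat{\alpha}'_{t,\varepsilon}(0))^2\big]=\frac1{2\pi}\int_D\int_D\mu\,(\lambda_\varepsilon\rho_\varepsilon-\mu^2)^{-3/2}$, obtained from the Fourier representation of $f'_\varepsilon$ and a bivariate Gaussian integral, where $\lambda_\varepsilon=(s-r)^{2H}+\varepsilon$, $\rho_\varepsilon=(s'-r')^{2H}+\varepsilon$, and $\mu=\mathbb{E}[(B^H_s-B^H_r)(B^H_{s'}-B^H_{r'})]\ge 0$. By orthogonality of the chaoses, $\mathbb{E}[R_\varepsilon^2]=\mathbb{E}\big[(\widehat{\alpha}'_{t,\varepsilon}(0))^2\big]-\mathrm{Var}(F_{1,\varepsilon})=\frac1{2\pi}\int_D\int_D\mu\big[(\lambda_\varepsilon\rho_\varepsilon-\mu^2)^{-3/2}-(\lambda_\varepsilon\rho_\varepsilon)^{-3/2}\big]$, a nonnegative integral whose integrand is appreciable only where $\mu^2$ is close to $\lambda_\varepsilon\rho_\varepsilon$, i.e.\ where the two interval-pairs $[r,s]$ and $[r',s']$ nearly coincide. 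On that near-diagonal set I would invoke the local nondeterminism of $B^H$ (as used in the proof of Lemma \ref{sec3-lem int}) to bound $\lambda\rho-\mu^2$ from below by a positive power of the distance between the interval-pairs, and then show the resulting integral is $O(\log1/\varepsilon)$, hence negligible after normalization; away from the diagonal, $\mu$ is of order $uu'\cdot(\mathrm{dist})^{2H-2}$ and the correction can be integrated directly, again with logarithmic rather than logarithmic-squared growth. I expect this uniform control of the higher-chaos remainder through local nondeterminism to be the main obstacle; it is the counterpart, at the critical value $H=\tfrac23$, of the estimates in Jaramillo and Nualart \cite{Jaramillo 2017} for $H>\tfrac23$, where the corresponding integrals were genuinely (power-)divergent rather than only logarithmically so.
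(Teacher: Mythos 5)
Your proposal is correct and follows essentially the same route as the paper: a Wiener chaos decomposition in which the first (Gaussian) chaos carries the full variance $\sigma^2(\log 1/\varepsilon)^2$ coming from the disjoint-interval configuration, the higher chaoses are negligible in $L^2$ after normalization, and Slutsky concludes; your asymptotic constants ($\frac{1}{2\pi}\cdot\frac49\cdot\frac9{16}=\frac1{8\pi}$, the factor $t^{4/3}B(2,\frac13)$, and the $\frac34\log\frac1\varepsilon$ crossover at $u\sim\varepsilon^{3/4}$) all agree with Lemmas \ref{sec3-lem3.4} and \ref{sec3-lem3.5}. The only organizational difference is that the paper establishes the two limits $(\log\frac1\varepsilon)^{-2}\mathbb{E}[(\widehat{\alpha}'_{t,\varepsilon}(0))^2]\to\sigma^2$ and $(\log\frac1\varepsilon)^{-2}\mathbb{E}[I_1(f_{1,\varepsilon})^2]\to\sigma^2$ separately and kills the remainder by subtraction, whereas you propose to bound the difference integral $\frac1{2\pi}\int_{D^2}\mu\big[(\lambda_\varepsilon\rho_\varepsilon-\mu^2)^{-3/2}-(\lambda_\varepsilon\rho_\varepsilon)^{-3/2}\big]$ directly.
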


The study of DSLT for fBm has a strong degree of heat, see in \cite{Jaramillo 2017}-\cite{Jung 2015}, \cite{Yan 2015} and references therein. However, the corresponding results for higher order derivative have not been studied, except for the higher order derivative of ILT for two independent fBms and some general Gaussian processes in \cite{Guo 2017} and \cite{HX}. As we all know, SLT and ILT have different integral structures in form. In particular, the independence of two fBms is required for ILT. So that the nondeterminism property which used for higher order derivative of ILT  can not be used directly here.

To obtain the main results, we would use the methods of sample configuration given in Jung and Markowsky \cite{Jung 2015} and  chaos decomposition provided in Jaramillo and Nualart \cite{Jaramillo 2017}. Chaos decomposition is more and more mature for the asymptotic properties of SLT (see in Hu \cite{Hu 2005}, Jaramillo and Nualart \cite{Jaramillo 2017} and the references therein). The sample configuration method gives a way to apply  nondeterminism property and
it is very powerful to prove the  H\"{o}lder regularity.
But the corresponding results of higher order DSLT for $d$-dimensional fBm still has certain difficulty. The main difficulty lies in the computational complexity of multiple stochastic integrals. Moreover, the related results can be extended to the general cases.
By the Theorem 4.1 in Jaramillo and Nualart \cite{Jaramillo 2017} and Theorem \ref{sec1-th sup} here, two limit theorems of the case $H>\frac{2}{2|k|+d}$ and critical case $H=\frac{2}{2|k|+d}$, with general $k=(k_1,\cdots, k_d)$ are left open. Extending these limit theorems to general cases will be worked in the future.

The paper has the following structure. We present some preliminary properties of $d$-dimensional fBm and some basic lemmas in Section 2. Section 3 is to prove the main results. To be exact, we will split this section into four subsections to prove the four theorems given in Section 1.
Throughout this paper, if not mentioned otherwise, the letter $C$, with or without a subscript,
denotes a generic positive finite constant and may change from line to line.

\section{Preliminaries}
In this section, we first give some properties of $d$-dimensional fBm $B^H$. It is well known that
$d$-dimensional fBm has self-similarity, stationary increments and H\"{o}lder continuity. When Hurst parameter $H>1/2$, $B^H$ exhibits long memory. When $H<1/2$, it has short memory. But in this paper, we need the following nondeterminism property.

By Nualart and Xu \cite{Nualart2014} (see also in Song, Xu and Yu \cite{Song 2018}), we can see that for any $n\in\mathbb{N}$, there exists two constants $\kappa_{H}$ and $\beta_{H}$ depending only on  $n$ and $H$, such that for any $0=s_0<s_1 <\cdots < s_n $, $1\le i \le n$, we have
\[
\kappa_{H} \sum_{i=1}^n |x_i|^2 (s_i -s_{i-1})^{2H} \le \mathrm{Var} \Big( \sum_{i=1}^n x_i  \cdot (B^H_{s_i} -B^H_{s_{i-1}}) \Big) \le \beta_{H}  \sum_{i=1}^n |x_i|^2 (s_i -s_{i-1})^{2H}.
\]

Next, we present two basic lemmas, which will be used in Section 3.

\begin{lemma} \label{sec3-lem int}
For any $\lambda, ~\mu, ~\rho\in \mathbb{R}$ with $\lambda>0$, $\rho>0$ and $\lambda\rho-\mu^2>0$. For $k\in\mathbb{Z}^+$, there exists a constant $C$ only depending on $k$,  such that

(i) if $k$ is odd,
\begin{equation}\label{sec3-eq3.1}
\begin{split}
\Big|\int_{\mathbb{R}^2}x^ky^ke^{-\frac12(\lambda x^2+\rho y^2+2\mu xy)}dxdy\Big|\leq
\begin{cases}
    \frac{C |\mu|^k}{(\lambda\rho-\mu^2)^{k+\frac12}}, &\mbox{if $\frac{\mu^2}{\lambda\rho-\mu^2}\geq1$,}\\
    \frac{C |\mu|}{(\lambda\rho-\mu^2)^{\frac{k}2+1}}, &\mbox{if $\frac{\mu^2}{\lambda\rho-\mu^2}<1$,}
   \end{cases}
\end{split}
\end{equation}

(ii) if $k$ is even,
\begin{equation}\label{sec3-eq3.2}
\begin{split}
\Big|\int_{\mathbb{R}^2}x^ky^ke^{-\frac12(\lambda x^2+\rho y^2+2\mu xy)}dxdy\Big|\leq
\begin{cases}
    \frac{C |\mu|^k}{(\lambda\rho-\mu^2)^{k+\frac12}}, &\mbox{if $\frac{\mu^2}{\lambda\rho-\mu^2}\geq1$,}\\
    \frac{C }{(\lambda\rho-\mu^2)^{\frac{k+1}2}}, &\mbox{if $\frac{\mu^2}{\lambda\rho-\mu^2}<1$.}
   \end{cases}
\end{split}
\end{equation}

\end{lemma}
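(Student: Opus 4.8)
The plan is to reduce the two-dimensional integral to iterated one-dimensional Gaussian integrals by integrating first in $x$. Writing $\lambda x^2 + 2\mu xy = \lambda\bigl(x + \tfrac{\mu y}{\lambda}\bigr)^2 - \tfrac{\mu^2}{\lambda}y^2$, the substitution $u = x + \tfrac{\mu y}{\lambda}$ turns the inner integral into $\int_{\mathbb{R}}(u - \tfrac{\mu y}{\lambda})^k e^{-\lambda u^2/2}\,du$. Expanding $(u-\tfrac{\mu y}{\lambda})^k$ by the binomial theorem and using that odd moments of a centered Gaussian vanish, only terms with $u^j$ for $j$ of one fixed parity survive, each producing a constant multiple of $\lambda^{-(j+1)/2}$ times $(\mu y/\lambda)^{k-j}$. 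Collecting powers of $y$, the $x$-integral equals $e^{\mu^2 y^2/(2\lambda)}$ times a polynomial in $y$ whose monomials $y^{k-j}$ carry coefficients of size $C|\mu|^{k-j}\lambda^{-(k+1)/2 + \text{(something)}}$; the key point is that, after multiplying by the outer $y^k e^{-\rho y^2/2}$, every term is of the form $C\,\mu^{k-j}\lambda^{a}\int_{\mathbb{R}} y^{2k-j} e^{-\frac12(\rho - \mu^2/\lambda)y^2}\,dy$, and since $\rho - \mu^2/\lambda = (\lambda\rho-\mu^2)/\lambda > 0$ this last integral is a constant times $\lambda^{(2k-j+1)/2}(\lambda\rho-\mu^2)^{-(2k-j+1)/2}$ when $2k-j$ is even (and zero otherwise).

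Next I would bookkeep the exponents carefully. After all the $\lambda$-powers cancel — they must, by a scaling/homogeneity check: replacing $(x,y)$ by $(tx, t^{-1}y)$ shows the integral depends on $\lambda,\rho$ only through $\lambda\rho$ and $\mu$, so in fact it is a function of $\mu$ and $\lambda\rho - \mu^2$ alone, homogeneous of degree $-(k+1)$ in the pair — one is left with a finite sum of terms $C_j\,\mu^{m_j}(\lambda\rho-\mu^2)^{-(k+1+m_j)/2}$ where $m_j$ runs over integers of the parity of $k$ between $0$ (or $1$) and $k$. Writing $R := \mu^2/(\lambda\rho - \mu^2)$, each such term equals $C_j (\lambda\rho-\mu^2)^{-(k+\frac12)} R^{(m_j-k)/2}\cdot(\lambda\rho-\mu^2)^{\text{?}}$; more cleanly, factor out the extreme terms: the $m_j = k$ term gives the $|\mu|^k(\lambda\rho-\mu^2)^{-(k+\frac12)}$ bound, and the smallest allowed $m_j$ (which is $0$ if $k$ is even, $1$ if $k$ is odd) gives $(\lambda\rho-\mu^2)^{-(k+1)/2}$ respectively $|\mu|(\lambda\rho-\mu^2)^{-(k+2)/2} = |\mu|(\lambda\rho-\mu^2)^{-(\frac{k}{2}+1)}$. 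When $R \geq 1$, i.e. $|\mu| \geq (\lambda\rho-\mu^2)^{1/2}$, the highest power of $|\mu|$ dominates and one gets the first branch in both \eqref{sec3-eq3.1} and \eqref{sec3-eq3.2}; when $R < 1$ the lowest power of $|\mu|$ dominates, giving the second branches. The parity of $k$ enters precisely here: the lowest surviving $m_j$ is $1$ (odd $k$) versus $0$ (even $k$), which is exactly the discrepancy between \eqref{sec3-eq3.1} and \eqref{sec3-eq3.2}.

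The main obstacle I anticipate is not any single estimate but the careful combinatorial bookkeeping of which powers $m_j$ actually occur and verifying that the intermediate $\lambda$-powers cancel so that the final bound depends only on $\mu$ and $\lambda\rho-\mu^2$; the homogeneity argument above is the clean way to sidestep a brute-force expansion, and I would state it first and then only need to identify the extreme monomials. A minor technical point is that when $2k - j$ is odd the corresponding $y$-integral vanishes, which just thins out the sum further and never hurts the bound. Throughout, all constants depend only on $k$ (through binomial coefficients and Gaussian moment constants), as claimed.
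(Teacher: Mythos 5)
Your proposal is correct and follows essentially the same route as the paper: complete the square in one variable, expand the shifted power by the binomial theorem so that only terms of one parity survive, integrate out the second variable, and observe that the resulting finite sum is geometric with ratio $\mu^2/(\lambda\rho-\mu^2)$, so the extreme term dominates according to whether this ratio is at least $1$ or less than $1$. The only cosmetic differences are that you integrate in $x$ first rather than $y$, and you add a scaling argument to predict the cancellation of the $\lambda$-powers instead of tracking them by hand; both are harmless.
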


\begin{proof}
First, we consider the integral with respect to $y$,
\begin{align*}
\int_{\mathbb{R}}y^ke^{-\frac{\rho}{2}y^2-\mu xy}dy&=e^{\frac{\mu^2x^2}{2\rho}}\int_{\mathbb{R}}y^ke^{-\frac{\rho}{2}(y+\frac{\mu x}{\rho})^2}dy\\
&=e^{\frac{\mu^2x^2}{2\rho}}\int_{\mathbb{R}}(y-\frac{\mu x}{\rho})^ke^{-\frac{\rho}{2}y^2}dy.
\end{align*}

If $k$ is odd, since
$$
(y-\frac{\mu x}{\rho})^k=\sum_{i=0}^kC_k^iy^i(-\frac{\mu x}{\rho})^{k-i},
$$
we have

\begin{align*}
\int_{\mathbb{R}}y^ke^{-\frac{\rho}{2}y^2-\mu xy}dy
&=e^{\frac{\mu^2x^2}{2\rho}}\int_{\mathbb{R}}A_{odd}e^{-\frac{\rho}{2}y^2}dy\\
&=C_1e^{\frac{\mu^2x^2}{2\rho}}\frac{(\mu x)^k}{\rho^{k+\frac12}}+C_3e^{\frac{\mu^2x^2}{2\rho}}\frac{(\mu x)^{k-2}}{\rho^{k-\frac12}}+\cdots+C_{k}e^{\frac{\mu^2x^2}{2\rho}}\frac{(\mu x)}{\rho^{k/2+1}}\\
&=:\widetilde{A}_{odd},
\end{align*}
where $C_1, ~C_3, \cdots, ~C_k$ are all positive constants and
$$A_{odd}=C_k^0(-\frac{\mu x}{\rho})^{k}+C_k^2y^2(-\frac{\mu x}{\rho})^{k-2}+\cdots+C_k^{k-1}y^{k-1}(-\frac{\mu x}{\rho}).$$

For the $dx$ integral,
$$\int_{\mathbb{R}}x^ke^{-\frac12\lambda x^2}(\widetilde{A}_{odd})dx\leq C\left[\frac{\mu^k}{(\lambda\rho-\mu^2)^{k+\frac12}}+\frac{\mu^{k-2}}{(\lambda\rho-\mu^2)^{k-\frac12}}+\cdots+\frac{\mu}{(\lambda\rho-\mu^2)^{\frac{k}{2}+1}}\right],
$$
where the right hand side is the sum of equal ratio series with the common ratio $\frac{\mu^2}{\lambda\rho-\mu^2}>0$.
Then, we get  \eqref{sec3-eq3.1}.

If $k$ is even,
\begin{align*}
\int_{\mathbb{R}}y^ke^{-\frac{\rho}{2}y^2-\mu xy}dy&=e^{\frac{\mu^2x^2}{2\rho}}\int_{\mathbb{R}}(y-\frac{\mu x}{\rho})^ke^{-\frac{\rho}{2}y^2}dy\\
&\leq C e^{\frac{\mu^2x^2}{2\rho}}\Big[\int_{\mathbb{R}}y^ke^{-\frac{\rho}{2}y^2}dy+\int_{\mathbb{R}}(\frac{\mu x}{\rho})^ke^{-\frac{\rho}{2}y^2}dy\Big]\\
&=:B_1+B_2.
\end{align*}

It is easy to see that
$$B_1\leq C e^{\frac{\mu^2x^2}{2\rho}}\rho^{-\frac{k+1}{2}}$$
and
$$B_2\leq C e^{\frac{\mu^2x^2}{2\rho}}(\mu x)^k\rho^{-\frac{2k+1}{2}}.$$

For the integral with respect to $x$,
\begin{align*}
\frac{1}{\rho^{\frac{k+1}2}}\int_{\mathbb{R}}x^{k}e^{-\frac{x^2}{2\rho}(\lambda\rho-\mu^2)}dx
&\leq \frac{C}{\rho^{\frac{k+1}2}}\int_{\mathbb{R}}x^{k}e^{-\frac{x^2}{2}}\Big(\frac{\lambda\rho-\mu^2}{\rho}\Big)^{-\frac{1+k}{2}}dx\\
&\leq \frac{C}{(\lambda\rho-\mu^2)^{\frac{k+1}2}}
\end{align*}
and
\begin{align*}
\frac{\mu^k}{\rho^{k+\frac12}}\int_{\mathbb{R}}x^{2k}e^{-\frac{x^2}{2\rho}(\lambda\rho-\mu^2)}dx
&\leq C\frac{\mu^k}{\rho^{k+\frac12}}\int_{\mathbb{R}}x^{2k}e^{-\frac{x^2}{2}}\Big(\frac{\lambda\rho-\mu^2}{\rho}\Big)^{-\frac{1+2k}{2}}dx\\
&\leq C \frac{\mu^k}{(\lambda\rho-\mu^2)^{k+\frac12}}.
\end{align*}

This gives \eqref{sec3-eq3.2}.
\end{proof}

The next lemma gives the bounds on the quantity of $\lambda\rho-\mu^2$, which could be obtained from the Appendix B in \cite{Jung 2014} or the Lemma 3.1 in \cite{Hu 2001}.

\begin{lemma} \label{sec3-lem3.2}
Let $$\lambda=|s-r|^{2H}, ~~\rho=|s'-r'|^{2H},$$
and
$$\mu=\frac12\Big(|s'-r|^{2H}+|s-r'|^{2H}-|s'-s|^{2H}-|r-r'|^{2H}\Big).$$

\textbf{Case (i)} Suppose that $D_1=\{(r,r',s,s')\in[0,t]^4 ~|~ r<r'<s<s'\}$, let $r'-r=a$, $s-r'=b$, $s'-s=c$. Then, there exists a constant $K_1$ such that
$$\lambda\rho-\mu^2\geq K_1\,\left((a+b)^{2H}c^{2H}+a^{2H}(b+c)^{2H}\right)$$
and
$$2\mu=(a+b+c)^{2H}+b^{2H}-a^{2H}-c^{2H}.$$

\textbf{Case (ii)} Suppose that $D_2=\{(r,r',s,s')\in[0,t]^4 ~|~ r<r'<s'<s\}$, let $r'-r=a$, $s'-r'=b$, $s-s'=c$. Then, there exists a constant $K_2$ such that
$$\lambda\rho-\mu^2\geq K_2\,b^{2H}\left(a^{2H}+c^{2H}\right)$$
and
$$2\mu=(a+b)^{2H}+(b+c)^{2H}-a^{2H}-c^{2H}.$$

\textbf{Case (iii)} Suppose that $D_3=\{(r,r',s,s')\in[0,t]^4 ~|~ r<s<r'<s'\}$, let $s-r=a$, $r'-s=b$, $s'-r'=c$. Then, there exists a constant $K_3$ such that
$$\lambda\rho-\mu^2\geq K_3(ac)^{2H}$$
and
$$2\mu=(a+b+c)^{2H}+b^{2H}-(a+b)^{2H}-(c+b)^{2H}.$$
\end{lemma}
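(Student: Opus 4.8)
The plan is to recognize $\lambda$, $\rho$ and $\mu$ as the variance and covariance entries of the Gaussian vector built from the two increments $U=B^H_s-B^H_r$ and $V=B^H_{s'}-B^H_{r'}$. A direct expansion of $\mathbb{E}[B^H_uB^H_v]=\frac12(u^{2H}+v^{2H}-|u-v|^{2H})$ gives $\lambda=\mathrm{Var}(U)$, $\rho=\mathrm{Var}(V)$ and $\mu=\mathrm{Cov}(U,V)$, so that $\lambda\rho-\mu^2=\det\mathrm{Cov}(U,V)$. The three displayed formulas for $2\mu$ are then obtained by plugging the point ordering of each case into the definition of $\mu$ and replacing the gaps by $a,b,c$; these are routine substitutions. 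The engine for the lower bounds is the variational identity
\[
\lambda\rho-\mu^2=\rho\,\min_{\alpha\in\mathbb{R}}\mathrm{Var}(U-\alpha V)=\lambda\,\min_{\alpha\in\mathbb{R}}\mathrm{Var}(V-\alpha U),
\]
valid for any centered square-integrable pair, together with the lower bound in the nondeterminism property of Section 2 and the elementary equivalence $c_H(x^{2H}+y^{2H})\le(x+y)^{2H}\le C_H(x^{2H}+y^{2H})$.

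For \textbf{Case (i)}, I would write $U$ and $V$ through the three consecutive increments $\xi_1,\xi_2,\xi_3$ over the disjoint intervals $[r,r'],[r',s],[s,s']$ of lengths $a,b,c$, so that $U=\xi_1+\xi_2$ and $V=\xi_2+\xi_3$. In the first representation $U-\alpha V=\xi_1+(1-\alpha)\xi_2-\alpha\xi_3$, and the nondeterminism lower bound gives $\mathrm{Var}(U-\alpha V)\ge\kappa_H[a^{2H}+(1-\alpha)^2b^{2H}+\alpha^2c^{2H}]$ for every $\alpha$; since a pointwise inequality passes to the minimum, minimizing the right-hand side explicitly (at $\alpha=b^{2H}/(b^{2H}+c^{2H})$) and multiplying by $\rho=(b+c)^{2H}$ yields $\lambda\rho-\mu^2\ge K[a^{2H}(b+c)^{2H}+b^{2H}c^{2H}]$. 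Running the symmetric computation with $V-\alpha U=-\alpha\xi_1+(1-\alpha)\xi_2+\xi_3$ and multiplying by $\lambda=(a+b)^{2H}$ gives $\lambda\rho-\mu^2\ge K[(a+b)^{2H}c^{2H}+a^{2H}b^{2H}]$. Adding the two valid lower bounds and discarding the nonnegative cross terms $a^{2H}b^{2H}$ and $b^{2H}c^{2H}$ produces exactly $\lambda\rho-\mu^2\ge K_1[(a+b)^{2H}c^{2H}+a^{2H}(b+c)^{2H}]$.

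\textbf{Cases (ii)} and \textbf{(iii)} are simpler because one side of the vector is a single increment, so a single representation with a trivial minimization suffices. In Case (ii), with $\xi_1,\xi_2,\xi_3$ over $[r,r'],[r',s'],[s',s]$ one has $V=\xi_2$ and $U=\xi_1+\xi_2+\xi_3$; the bound $\lambda\rho-\mu^2=\rho\min_\alpha\mathrm{Var}(U-\alpha V)\ge b^{2H}\min_\alpha\kappa_H[a^{2H}+(1-\alpha)^2b^{2H}+c^{2H}]$, minimized at $\alpha=1$, gives $K_2b^{2H}(a^{2H}+c^{2H})$. In Case (iii), $U=\xi_1$ and $V=\xi_3$ are the two disjoint end increments of lengths $a$ and $c$, and $\lambda\rho-\mu^2=\lambda\min_\alpha\mathrm{Var}(V-\alpha U)\ge a^{2H}\min_\alpha\kappa_H[\alpha^2a^{2H}+c^{2H}]=\kappa_H(ac)^{2H}$, minimized at $\alpha=0$. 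I expect the only real obstacle to be Case (i): a single variational representation loses one of the two target terms, recovering only $b^{2H}c^{2H}$ rather than the larger $(a+b)^{2H}c^{2H}$, and the point is to run both representations and add them, using $(x+y)^{2H}\asymp x^{2H}+y^{2H}$ to convert the intermediate quantities into the stated form. Alternatively one may quote Appendix B of \cite{Jung 2014} or Lemma 3.1 of \cite{Hu 2001} verbatim, but the nondeterminism-based argument above is self-contained.
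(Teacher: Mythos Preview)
Your argument is correct. The paper itself does not prove this lemma: immediately before the statement it simply says the bounds ``could be obtained from the Appendix B in \cite{Jung 2014} or the Lemma 3.1 in \cite{Hu 2001}'' and leaves it at that. Your route is therefore genuinely different in that it is self-contained: you recognize $\lambda\rho-\mu^2$ as the determinant of the covariance of $(U,V)=(B^H_s-B^H_r,\,B^H_{s'}-B^H_{r'})$, use the variational identity $\det\mathrm{Cov}(U,V)=\rho\min_\alpha\mathrm{Var}(U-\alpha V)=\lambda\min_\alpha\mathrm{Var}(V-\alpha U)$, and feed in the two-sided local nondeterminism inequality already recorded in Section~2. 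This buys you a proof that uses only material present in the paper, whereas the cited references establish the bounds by direct algebraic manipulation of the covariance function of fBm. One small remark: your claim that a single variational representation in Case~(i) is insufficient is slightly pessimistic. From the one-sided bound $\lambda\rho-\mu^2\ge K\big[a^{2H}(b+c)^{2H}+b^{2H}c^{2H}\big]$ alone you can already recover the full target, since splitting $a^{2H}(b+c)^{2H}$ in half and using $(b+c)^{2H}\ge c^{2H}$ together with $a^{2H}+b^{2H}\asymp(a+b)^{2H}$ reconstructs the missing term $(a+b)^{2H}c^{2H}$. Your two-representation argument is nonetheless correct and arguably cleaner.
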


\section{Proof of the main results}

In this section, the proof of  Theorems \ref{sec1-th L2}, \ref{sec1-th Lp}, \ref{sec1-th H} and \ref{sec1-th sup}  are taken into account. We will divide this section into four parts and give the proof of the corresponding theorem in each part.

\subsection{Proof of Theorem \ref{sec1-th L2}}

By \eqref{sec1-eq1.2} and the proof of Lemma \ref{sec3-lem int},
\begin{align*}
\mathbb{E}\Big[\widehat{\alpha}^{(k)}_{t,\varepsilon}(0)\widehat{\alpha}^{(k)}_{t,\eta}(0)\Big]&=\frac1{(2\pi)^{2d}}\int_{D^2}\int_{\mathbb{R}^{2d}}p_1^kp_2^ke^{\frac{-(\varepsilon|p_1|^2+\eta|p_2|^2)}{2}}\mathbb{E}\Big[\prod_{j=1}^2e^{i\langle p_j, B^H_{s_j}-B^H_{r_j}\rangle}\Big]dp_1dp_2 dr'dr ds'ds\\
&=C \int_{D^2}\int_{\mathbb{R}^{2d}}\prod_{i=1}^dp_{i1}^{k_i}\prod_{i=1}^dp_{i2}^{k_i}e^{-\frac12(|p_1|^2(\lambda+\varepsilon)+|p_2|^2(\rho+\eta)+2\langle p_1, p_2\rangle\mu)}dp_1dp_2 dr'dr ds'ds\\
&=C \int_{D^2}\Big[\int_{\mathbb{R}^{2}}p_{11}^{k_1}p_{12}^{k_1}e^{-\frac12(p_{11}^2(\lambda+\varepsilon)+p_{12}^2(\rho+\eta)+2p_{11}p_{12}\mu)}dp_{11}dp_{12} \Big]\\
&\qquad\times \cdots \times \Big[\int_{\mathbb{R}^{2}}p_{d1}^{k_d}p_{d2}^{k_d}e^{-\frac12(p_{d1}^2(\lambda+\varepsilon)+p_{d2}^2(\rho+\eta)+2p_{d1}p_{d2}\mu)}dp_{d1}dp_{d2} \Big]dr'dr ds'ds\\
&=:C\int_{D^2}\prod_{i=1}^d\Xi_{k_i}dr'dr ds'ds,
\end{align*}
where
$\lambda=|s-r|^{2H}, ~\rho=|s'-r'|^{2H}$,
$$\mu=\frac12\Big(|s'-r|^{2H}+|s-r'|^{2H}-|s'-s|^{2H}-|r-r'|^{2H}\Big)$$
and
\begin{align*}
\Xi_{k_i}=
\begin{cases}
    \frac{C_{k_i} \mu^{k_i}}{((\lambda+\varepsilon)(\rho+\eta)-\mu^2)^{k_i+\frac12}}
    +\frac{C_{k_i-2} \mu^{k_i-2}}{((\lambda+\varepsilon)(\rho+\eta)-\mu^2)^{k_i-\frac12}}
    +\cdots+\frac{C_{1} \mu}{((\lambda+\varepsilon)(\rho+\eta)-\mu^2)^{\frac{k_i}{2}+1}}, &\mbox{if $k_i$ is odd,}\\
    \frac{C_{k_i} \mu^{k_i}}{((\lambda+\varepsilon)(\rho+\eta)-\mu^2)^{k_i+\frac12}}
    +\frac{C_{k_i-2} \mu^{k_i-2}}{((\lambda+\varepsilon)(\rho+\eta)-\mu^2)^{k_i-\frac12}}
    +\cdots+\frac{C_{0}}{((\lambda+\varepsilon)(\rho+\eta)-\mu^2)^{\frac{k_i}{2}+\frac12}}, &\mbox{if $k_i$ is even,}
   \end{cases}
\end{align*}
for $i=1, 2, \cdots, d$.

Notice that, for any $\varepsilon_1, ~\varepsilon_2>0$,
\begin{align*}
\mathbb{E}\Big[\big(\widehat{\alpha}^{(k)}_{t,\varepsilon_1}(0)-\widehat{\alpha}^{(k)}_{t,\varepsilon_2}(0)\big)^2\Big]&\leq C \int_{D^2}\bigg|\int_{\mathbb{R}^{2d}}\prod_{j=1}^2\left(e^{-\frac{\varepsilon_1}{2}|p_j|^2}-e^{-\frac{\varepsilon_2}{2}|p_j|^2}\right)\\
&\qquad\qquad\qquad\times p_1^kp_2^k\mathbb{E}\Big[\prod_{j=1}^2e^{i\langle p_j, B^H_{s_j}-B^H_{r_j}\rangle}\Big]dp_1dp_2 \bigg|dr'drds'ds\\
&\leq C\int_{D^2}\prod_{j=1}^2\max_{p_j}\left|e^{-\frac{\varepsilon_1}{2}|p_j|^2}-e^{-\frac{\varepsilon_2}{2}|p_j|^2}\right|\prod_{i=1}^d\left|\widetilde{\Xi_{k_i}}\right|dr'drds'ds,
\end{align*}
where
\begin{align*}
\widetilde{\Xi_{k_i}}&=
\begin{cases}
    \frac{C_{k_i} \mu^{k_i}}{(\lambda\rho-\mu^2)^{k_i+\frac12}}
    +\frac{C_{k_i-2} \mu^{k_i-2}}{(\lambda\rho-\mu^2)^{k_i-\frac12}}
    +\cdots+\frac{C_{1} \mu}{(\lambda\rho-\mu^2)^{\frac{k_i}{2}+1}}, &\mbox{if $k_i$ is odd,}\\
    \frac{C_{k_i} \mu^{k_i}}{(\lambda\rho-\mu^2)^{k_i+\frac12}}
    +\frac{C_{k_i-2} \mu^{k_i-2}}{(\lambda\rho-\mu^2)^{k_i-\frac12}}
    +\cdots+\frac{C_{0}}{(\lambda\rho-\mu^2)^{\frac{k_i}{2}+\frac12}}, &\mbox{if $k_i$ is even.}
   \end{cases}
\end{align*}

Consequently, if
$$\int_{D^2}\prod_{i=1}^d\left|\widetilde{\Xi_{k_i}}\right|dr'dr ds'ds<\infty,$$
then $\widehat{\alpha}^{(k)}_{t,\varepsilon}(0)$ converges in $L^2$ as $\varepsilon\to0$.

By Lemma \ref{sec3-lem int}, we can see that $\int_{D^2}\prod_{i=1}^d\left|\widetilde{\Xi_{k_i}}\right|dr'dr ds'ds$
is less than
\begin{align*}
C \int_{D^2} \left[\frac{|\mu|^{|k|}}{(\lambda\rho-\mu^2)^{|k|+\frac{d}2}}+\frac{|\mu|^{\#}}{(\lambda\rho-\mu^2)^{\frac{|k|+d+\#}2}}\right]dr'dr ds'ds,
\end{align*}
where $\#=\#\{k_i ~is ~odd, ~i=1, 2, \cdots, d\}$ denotes the odd number of $k_i$, for $i=1, 2, \cdots, d$, and $\#\in\{0, 1, 2, \cdots, d\}$.

Thus, to prove the finiteness of $\int_{D^2}\prod_{i=1}^d\left|\widetilde{\Xi_{k_i}}\right|dr'dr ds'ds$, we only need to prove
\begin{equation}\label{sec3-eq3.3}
\int_{D^2} \frac{|\mu|^{Q}}{(\lambda\rho-\mu^2)^{\frac{|k|+d+Q}2}}dr'dr ds'ds<\infty
\end{equation}
with $Q=|k|$ and $Q=\#$.

By Lemma \ref{sec3-lem3.2}, we can see $D^2$ is the union of the sets $D_1, ~D_2, ~D_3$.

When $(r,r',s,s')\in D_1$,  then the left hand side of \eqref{sec3-eq3.3} is less than
\begin{align*}
C &\int_{[0,t]^3}\frac{a^{2HQ}+b^{2HQ}+c^{2HQ}}{(a+b)^{\frac{H}{2}(|k|+d+Q)}(b+c)^{\frac{H}{2}(|k|+d+Q)}(ac)^{\frac{H}{2}(|k|+d+Q)}}dadbdc\\
&\leq C\int_{[0,t]^3}\frac{a^{2HQ}}{a^{\frac{H}{2}(|k|+d+Q)}b^{\frac{H}{2}(|k|+d+Q)}(ac)^{\frac{H}{2}(|k|+d+Q)}}dadbdc\\
&\quad+C\int_{[0,t]^3}\frac{b^{2HQ}}{b^{\frac{H}{2}(|k|+d+Q)}b^{\frac{H}{2}(|k|+d+Q)}(ac)^{\frac{H}{2}(|k|+d+Q)}}dadbdc\\
&\quad+C\int_{[0,t]^3}\frac{c^{2HQ}}{b^{\frac{H}{2}(|k|+d+Q)}c^{\frac{H}{2}(|k|+d+Q)}(ac)^{\frac{H}{2}(|k|+d+Q)}}dadbdc\\
&\leq C\int_{[0,t]^3}\frac{1}{x^{H(|k|+d-Q)}y^{\frac{H}{2}(|k|+d+Q)}z^{\frac{H}{2}(|k|+d+Q)}}dxdydz<\infty.
\end{align*}

When $(r,r',s,s')\in D_2$.
Note that, for the condition $H<\min\{\frac2{2|k|+d},\frac{1}{|k|+d-\#}, \frac1d\}$, only in the case $d=1$, $H$ can get the value bigger than $\frac12$, while these the case have been studied in \cite{Hu 2005} and \cite{Jung 2014}, respectively. So, we only need to consider the case $H<\frac12$.

Thus, the left hand side of \eqref{sec3-eq3.3} is less than
\begin{align*}
C &\int_{[0,t]^3}\frac{[(a+b)^{2H}+(b+c)^{2H}-a^{2H}-c^{2H}]^{Q}}{b^{H(|k|+d+Q)}(ac)^{\frac{H}{2}(|k|+d+Q)}}dadbdc\\
&\leq C\int_{[0,t]^3}\frac{[(a+b)^{2H}-a^{2H}]^{Q}}{b^{H(|k|+d+Q)}(ac)^{\frac{H}{2}(|k|+d+Q)}}dadbdc\\
&\quad\quad+ C\int_{[0,t]^3}\frac{[(b+c)^{2H}-c^{2H}]^{Q}}{b^{H(|k|+d+Q)}(ac)^{\frac{H}{2}(|k|+d+Q)}}dadbdc\\
&\leq 2C\int_{[0,t]^2}\frac{[(a+b)^{2H}-a^{2H}]^{Q}}{b^{H(|k|+d+Q)}a^{\frac{H}{2}(|k|+d+Q)}}dadb.
\end{align*}

Since $H<\frac12$, then
$$[(a+b)^{2H}-a^{2H}]^{Q}\leq b^{2HQ}.$$

Thus,
\begin{align*}
\int_{[0,t]^2}\frac{[(a+b)^{2H}-a^{2H}]^{Q}}{b^{H(|k|+d+Q)}a^{\frac{H}{2}(|k|+d+Q)}}dadb
&\leq C \int_{[0,t]^2}\frac{b^{2HQ}}{b^{H(|k|+d+Q)}a^{\frac{H}{2}(|k|+d+Q)}}dadb\\
&\leq C \int_{[0,t]^2}\frac{1}{b^{H(|k|+d-Q)}a^{\frac{H}{2}(|k|+d+Q)}}dadb<\infty.
\end{align*}

When $(r,r',s,s')\in D_3$.
For $\alpha, \beta>0$ with $\alpha+\beta=1$, there exists a positive constant $K$ such that
\begin{align*}
|\mu|&=\frac12\Big|(a+b+c)^{2H}+b^{2H}-(a+b)^{2H}-(b+c)^{2H}\Big|\\
&=\Big|H(2H-1)ac\int_0^1\int_0^1(b+au+cv)^{2H-2}dudv\Big|\\
&\leq ac\int_0^1\int_0^1\Big[b^\alpha(au+cv)^\beta\Big]^{2H-2}dudv\\
&\leq ac\int_0^1\int_0^1\Big[b^\alpha(au)^{\frac{\beta}{2}}(cv)^{\frac{\beta}{2}}\Big]^{2H-2}dudv\\
&\leq K (ac)^{\beta(H-1)+1}b^{2\alpha(H-1)}.
\end{align*}

Thus, the left hand side of \eqref{sec3-eq3.3} is less than
\begin{align*}
C \int_{[0,t]^3}\frac{[(ac)^{\beta(H-1)+1}b^{2\alpha(H-1)}]^{Q}}{(ac)^{H(|k|+d+Q)}}dadbdc
&\leq C\int_{[0,t]^3}\frac{1}{b^{2\alpha Q(1-H)}(ac)^{\beta(Q-HQ)+H|k|+Hd+HQ-Q}}dadbdc.
\end{align*}

Note that $|k|\geq 1$ (where $|k|=0$ with $H<\frac1d$ could deduced from \cite{Hu 2005}) and $H<\frac12$.
When $Q=0$ (all derivatives were of even order), the result of \eqref{sec3-eq3.3} is obvious by $H<\frac1{|k|+d}$.
When $Q\geq1$, we have $2Q(1-H)>1$. So, we first choose $\varepsilon_0>0$, such that
$$H(|k|+d)-\frac12+\frac{\varepsilon_0}{2}\Big(\frac2{|k|+d+Q}-H\Big)<1.$$
Then we can choose
$$\alpha\in\left(\frac{1-\varepsilon_0(\frac2{|k|+d+Q}-H)}{2Q(1-H)},\frac{1}{2Q(1-H)}\right).$$
Thus,
\begin{align*}
\beta(Q-HQ)+H|k|+Hd+HQ-Q&=(1-\alpha)(Q-HQ)+H|k|+Hd+HQ-Q\\
&<H(|k|+d)-\frac12+\frac{\varepsilon_0}{2}\Big(\frac2{|k|+d+Q}-H\Big),
\end{align*}
which is less than one. This gives \eqref{sec3-eq3.3}.

\subsection{Proof of Theorem \ref{sec1-th Lp}}

By \eqref{sec1-eq1.2}, we have
\begin{align*}
\left|\mathbb{E}\Big[\big(\widehat{\alpha}^{(k)}_{t,\varepsilon}(0)\big)^n\Big]\right|&\leq C \int_{D^n}\int_{\mathbb{R}^{nd}}\prod_{j=1}^n|p_j^k|\mathbb{E}\Big[\prod_{j=1}^ne^{i\langle p_j, B^H_{s_j}-B^H_{r_j}\rangle}\Big]dp dr ds,
\end{align*}
where $k=(k_1, \cdots, k_d)$, $|p_j^k|=\prod_{i=1}^d|p_{ij}|^{k_i}$ for $j=1,...,n$, $drds=dr_1\cdots dr_nds_1\cdots ds_n$
and
$$dp=dp_1\cdots dp_n=dp_{11}dp_{12}\cdots dp_{1n}\cdots dp_{d1}dp_{d2}\cdots dp_{dn}.$$

We use the method of sample configuration as in Jung and Markowsky \cite{Jung 2015}.
Fix an ordering of the set $\{r_1, s_1, r_2, s_2, \cdots, r_n, s_n\}$, and let $l_1\leq l_2\leq \cdots \leq l_{2n}$ be a relabeling of the set $\{r_1, s_1, r_2, s_2, \cdots, r_n, s_n\}$. Let $u_1 \ldots u_{2n-1}$ be the proper linear combinations of the $p_{j}$'s so that

$$
\mathbb{E}\Big[\prod_{j=1}^n e^{i\langle p_j, B^H_{s_j}-B^H_{r_j}\rangle}\Big] = \mathbb{E}\Big[\prod_{j=1}^{2n-1}e^{i\langle u_j, B^H_{l_{j+1}}-B^H_{l_j}\rangle}\Big].
$$

A detailed description of how the $u$'s are chosen can be found in \cite{Jung 2015}.
Then by the local nondeterminism of fBm,
$$\Big|\mathbb{E}\Big[\prod_{j=1}^ne^{i\langle p_j, B^H_{s_j}-B^H_{r_j}\rangle}\Big]\Big|\leq e^{-c \sum_{j=1}^{2n-1}|u_j|^2(l_{j+1}-l_j)^{2H}}.$$

Fix $j$, and let $j_1$ to be the smallest value such that $u_{j_1}$ contains $p_{j}$ as a term and then choose $j_2$ to be
the smallest value strictly larger than $j_1$ such that $u_{j_2}$ does not contain $p_{j}$ as a term. Then $p_j = u_{j_1} - u_{j_1-1} = u_{j_2-1} - u_{j_2}$.
Similarly to Jung and Markowsky \cite{Jung 2015}, we can see that, with the convention that $u_{0}=u_{2n}=0$,
\begin{align*}
|p_j^k|=&|(u_{j_1}-u_{j_1-1})^{\frac {k}2}||(u_{j_2-1}-u_{j_2})^{\frac {k}2}| \\
&=\prod_{i=1}^d|u_{ij_1}-u_{i(j_1-1)}|^{\frac {k_i}2}|u_{i(j_2-1)}-u_{ij_2}|^{\frac {k_i}2} \\
&\leq C\prod_{i=1}^d(|u_{ij_1}|^{\frac {k_i}2}+|u_{i(j_1-1)}|^{\frac {k_i}2})(|u_{i(j_2-1)}|^{\frac {k_i}2}+|u_{ij_2}|^{\frac {k_i}2})
\end{align*}

Thus,

$$
\prod_{j=1}^n|p_j^k| = \prod_{j=1}^{2n}|(u_{j}-u_{j-1})^{\frac {k}2}| \leq C \prod_{i=1}^d \prod_{j=1}^{2n} (|u_{ij}|^{\frac {k_i}2}+|u_{i(j-1)}|^{\frac {k_i}2}).
$$
and
\begin{equation} \label{tria}
\begin{split}
\prod_{i=1}^d\prod_{j=1}^{2n}(|u_{ij}|^{\frac {k_i}2}+|u_{i(j-1)}|^{\frac {k_i}2})
&=\sum_{S_1}\prod_{i=1}^d\prod_{j=1}^{2n}(|u_{ij}|^{\frac {k_i}2\gamma_{i,j}}|u_{i(j-1)}|^{\frac {k_i}2\overline{\gamma_{i,j}}})\\
&\leq\sum_{S_2}\prod_{i=1}^d\prod_{j=1}^{2n-1}(|u_{ij}|^{\frac {k_i}2\alpha_{i,j}}),
\end{split}
\end{equation}
where $$S_1=\left\{\gamma_{i,j}, ~\overline{\gamma_{i,j}}: ~\gamma_{i,j}\in\{0,1\}, ~\gamma_{i,j}+\overline{\gamma_{i,j}}=1, ~~i=1,\cdots, d, ~j=1, \cdots, 2n\right\}$$
and
$$
S_2=\left\{\alpha_{i,j}: ~\alpha_{i,j}\in\{0, 1, 2\}, ~i=1,\cdots, d, ~j=1, \cdots 2n-1\right\}.
$$

Note that we have omitted the terms $j=0, 2n$ in the final expression in (\ref{tria}) since $u_0=u_{2n} = 0$. Then
\begin{align*}
\left|\mathbb{E}\Big[(\widehat{\alpha}^{(k)}_{t,\varepsilon}(0))^n\Big]\right|
&\leq C \int_{E^n}\int_{\mathbb{R}^{nd}}e^{-c \sum_{j=1}^{2n-1}|u_j|^2(l_{j+1}-l_j)^{2H}}\prod_{i=1}^d\prod_{j=1}^{2n}(|u_{ij}|^{\frac {k_i}2}+|u_{i(j-1)}|^{\frac {k_i}2})dp dl\\
&\leq C \sum_{S_2}\int_{E^n}\int_{\mathbb{R}^{nd}}e^{-c \sum_{j=1}^{2n-1}|u_j|^2(l_{j+1}-l_j)^{2H}}\prod_{i=1}^d\prod_{j=1}^{2n-1}(|u_{ij}|^{\frac {k_i}{2}\alpha_{i,j}})dp dl\\
&=C \sum_{S_2}\int_{E^n}\int_{\mathbb{R}^{nd}}e^{-c \sum_{j=1}^{2n-1}|u_j|^2(l_{j+1}-l_j)^{2H}}\prod_{j=1}^{2n-1}(|u_j^{\frac {k}2\alpha_j}|)dp dl,
\end{align*}
where $E^n=\{0<l_1<\cdots<l_{2n}<t\}$, $|u_j^{\frac {k}2\alpha_j}|=\prod_{i=1}^d|u_{ij}|^{\frac{k_i}{2}\alpha_{i,j}}$ and
$dl=dl_1dl_2\cdots dl_{2n}.$

It is easy to observe that $\{u_{1}, u_{2},\cdots, u_{2n-1}\}$ is contained in the span of $\{p_{1}, p_{2}, \cdots, p_{n}\}$ and conversely, so we can let $\mathcal{A}$ be a subset of $\{1, \ldots , 2n-1\}$ such that the set $\{u_j\}_{j \in \mathcal{A}}$ spans $\{p_{1}, p_{2}, \cdots, p_{n}\}.$ We let $\mathcal{A}^c$ denote the complement of $\mathcal{A}$ in $\{1, \ldots , 2n-1\}$. Note that

\begin{align*}
&e^{-c \sum_{j\in \AA^c}|u_{j}|^2(l_{j+1}-l_j)^{2H}}\prod_{j\in\AA^c}(|u_{j}^{\frac {k}2\alpha_j}|)\\
&\quad=e^{-c \sum_{j\in \AA^c}|u_{j}|^2(l_{j+1}-l_j)^{2H}}\prod_{j\in \AA^c}\Big(|u_{j}^{\frac k2\alpha_j}|(l_{j+1}-l_j)^{\frac{H|k\alpha_j|}{2}}\Big)\prod_{j\in \AA^c}(l_{j+1}-l_j)^{-\frac{H|k\alpha_j|}{2}}\\
&\quad\leq C\prod_{j\in \AA^c}(l_{j+1}-l_j)^{-\frac{H|k\alpha_j|}{2}},
\end{align*}
where $|k\alpha_j|=k_1\alpha_{1,j}+\cdots+k_d\alpha_{d,j}$.
Then, we perform a linear transformation changing $(p_{1}, ~p_{2}, \cdots, p_{n})$ into an integral with respect to variables $\{u_{j}\}_{j \in \AA}$,
\begin{align*}
&\int_{\mathbb{R}^{nd}}e^{-c \sum_{j=1}^{2n-1}|u_j|^2(l_{j+1}-l_j)^{2H}}\prod_{j=1}^{2n-1}(|u_j^{\frac {k}{2}\alpha_j}|)dp\\
&\qquad\leq C\prod_{j\in \AA^c}(l_{j+1}-l_j)^{-\frac{H|k\alpha_j|}{2}}\int_{\mathbb{R}^{nd}}e^{-c \sum_{j\in \AA}|u_{j}|^2(l_{j+1}-l_j)^{2H}}\prod_{j\in \AA}(|u_{j}^{\frac {k}2\alpha_j}|)dp\\
&\qquad=C|J|\prod_{j\in \AA^c}(l_{j+1}-l_j)^{-\frac{H|k\alpha_j|}{2}}\int_{\mathbb{R}^{nd}}e^{-c \sum_{j\in \AA}|u_{j}|^2(l_{j+1}-l_j)^{2H}}\prod_{j\in \AA}(|u_{j}^{\frac {k}2\alpha_j}|)du,
\end{align*}
where $|J|$ is the Jacobian determinant of changing  variables
$(p_{1}, ~p_{2}, \cdots, p_{n})$ to $(u_{j}, j\in\AA)$.

Therefore, we may reduce the convergence of $\left|\mathbb{E}[(\widehat{\alpha}^{(k)}_{t,\varepsilon}(0))^n]\right|$ to show the finiteness of
$$\int_{E^n}\int_{\mathbb{R}^{nd}}e^{-c \sum_{j\in \AA}|u_{j}|^2(l_{j+1}-l_j)^{2H}}\prod_{j\in \AA}(|u_{j}^{\frac k2\alpha_j}|)\prod_{j\in \AA^c}(l_{j+1}-l_j)^{-\frac{H|k\alpha_j|}{2}}dudl=:\Lambda.$$

Since
$$\int_{\mathbb{R}}e^{-c ~u^2_{ij}(l_{j+1}-l_j)^{2H}}|u_{ij}|^{\frac{k_i}{2}\alpha_{i,j}}du_{ij}\leq C(l_{j+1}-l_j)^{-H-\frac{H}{2}k_i\alpha_{i,j} },$$
we have
\begin{align*}
\Lambda&\leq C \int_{E^n}\prod_{j\in \AA}(l_{j+1}-l_j)^{-Hd-\frac{H}2|k\alpha_j|}\prod_{j\in \AA^c}(l_{j+1}-l_j)^{-\frac{H|k\alpha_j|}{2}}dl\\
&\leq  C_{n,H,t}\int_{E^n}\prod_{j\in \AA}(l_{j+1}-l_j)^{-Hd-H|k|}\prod_{j\in \AA^c}(l_{j+1}-l_j)^{-H|k|}dl\\
&\leq C_{n,H,t}\frac{\Gamma^{n}(1-Hd-H|k|)\Gamma^{n-1}(1-H|k|)}{\Gamma\Big(n(1-Hd-H|k|)+(n-1)(1-H|k|)+1\Big)},
\end{align*}
where $C_{n,H,t}$ is a constant dependent on $n, ~H$ and $t$.

Thus, we can see that $\left|\mathbb{E}[(\widehat{\alpha}^{(k)}_{t,\varepsilon}(0))^n]\right|$ is finite for all $\varepsilon>0$ under condition $H(d+|k|)<1$. Then, we need to prove  $\{\widehat{\alpha}^{(k)}_{t,\varepsilon}(0)\}_{\varepsilon>0}$ is a Cauchy sequence.

Notice that, for any $\varepsilon_1, ~\varepsilon_2>0$,
\begin{align*}
\left|\mathbb{E}\Big[(\widehat{\alpha}^{(k)}_{t,\varepsilon_1}(0)-\widehat{\alpha}^{(k)}_{t,\varepsilon_2}(0))^n\Big]\right|&\leq C \int_{D^n}\int_{\mathbb{R}^{nd}}\prod_{j=1}^n|e^{-\frac{\varepsilon_1}{2}|p_j|^2}-e^{-\frac{\varepsilon_2}{2}|p_j|^2}|\\
&\qquad\qquad\qquad\times\prod_{j=1}^n|p_j^k|\mathbb{E}\Big[\prod_{j=1}^ne^{i\langle p_j, B^H_{s_j}-B^H_{r_j}\rangle}\Big]dp dr ds.
\end{align*}
By the dominated convergence theorem and
\begin{align*}
 \int_{D^n}\int_{\mathbb{R}^{nd}}
\prod_{j=1}^n|p_j^k|\mathbb{E}\Big[\prod_{j=1}^ne^{i\langle p_j, B^H_{s_j}-B^H_{r_j}\rangle}\Big]dp dr ds<\infty,
\end{align*}
we can obtain the desired result. This completes the proof.

\subsection{Proof of Theorem \ref{sec1-th H}}

Let us first prove \eqref{sec1-eq1.3}. For any $\lambda\in[0,1]$, we have the following inequalities
$$|e^{-i\langle p_j, x\rangle}-e^{-i\langle p_j, y\rangle}|\leq C |p_j|^\lambda|x-y|^{\lambda}$$
and
\begin{align*}
|p_j|^\lambda=(p_{1j}^2+\cdots+p_{dj}^2)^{\frac{\lambda}2}\leq C\Big(|p_{1j}|^{\lambda}+\cdots +|p_{dj}|^{\lambda}\Big).
\end{align*}

Using the similar methods as in the proof of Theorem \ref{sec1-th Lp}, we find that
\begin{align*}
\left|\mathbb{E}\Big[\Big(\widehat{\alpha}^{(k)}_{t}(x)-\widehat{\alpha}^{(k)}_{t}(y)\Big)^n\Big]\right|&\leq C |x-y|^{n\lambda} \int_{D^n}\int_{\mathbb{R}^{nd}}\prod_{j=1}^n|p_j^k|\Big(|p_{1j}|^{\lambda}+\cdots +|p_{dj}|^{\lambda}\Big)\\
&\qquad\qquad\qquad\times\mathbb{E}\Big[\prod_{j=1}^ne^{i\langle p_j, B^H_{s_j}-B^H_{r_j}\rangle}\Big]dp dr ds\\
&\leq C |x-y|^{n\lambda} \sum_{l=1}^d\int_{D^n}\int_{\mathbb{R}^{nd}}\prod_{j=1}^n|p_j^{k+\tilde{\lambda}_l}|\mathbb{E}\Big[\prod_{j=1}^ne^{i\langle p_j, B^H_{s_j}-B^H_{r_j}\rangle}\Big]dp dr ds\\
&=:C |x-y|^{n\lambda}\Lambda_1,
\end{align*}
where $\tilde{\lambda}_l=(\lambda_1,\cdots, \lambda_d)$ with $\lambda_l=\lambda$ and all other $\lambda_j=0$. So $|k+\tilde{\lambda}|=|k|+\lambda$
and $\Lambda_1$ is less than (with $\AA$ defined as before)
\begin{align*}
&C \int_{E^n}\prod_{j\in \AA}(l_{j+1}-l_j)^{-Hd-\frac{H}2|\alpha_j(k+\tilde{\lambda})|}\prod_{j\in \AA^c}(l_{j+1}-l_j)^{-\frac{H}2|\alpha_j(k+\tilde{\lambda})|} dl\\
&\qquad\leq C_{n,H,t}\frac{\Gamma^{n}(1-Hd-H|k+\tilde{\lambda}|)\Gamma^{n-1}(1-H|k+\tilde{\lambda}|)}{\Gamma\Big(n(1-Hd-H|k+\tilde{\lambda}|)+(n-1)(1-H|k+\tilde{\lambda}|)+1\Big)},
\end{align*}
which is finite if $1-Hd-H|k+\tilde{\lambda}|>0$.

For the proof of \eqref{sec1-eq1.4}, let $\tilde{D}=\{(r,s): 0<r<s<\tilde{t}\}$ and without loss of generality, we assume that $t<\tilde{t}$. Then
\begin{align*}
\left|\mathbb{E}\Big[\Big(\widehat{\alpha}^{(k)}_{t}(y)-\widehat{\alpha}^{(k)}_{\tilde{t}}(y)\Big)^n\Big]\right|&\leq C \int_{(\tilde{D}\setminus D)^n}\int_{\mathbb{R}^{nd}}\prod_{j=1}^n|p_j^k|\mathbb{E}\Big[\prod_{j=1}^ne^{i\langle p_j, B^H_{s_j}-B^H_{r_j}\rangle}\Big]dp dr ds\\
&\leq C\int_{[t,\tilde{t}]^n}\int_{[0,s_1]\times\cdots\times [0,s_n]}\int_{\mathbb{R}^{nd}}\prod_{j=1}^n|p_j^k|\mathbb{E}\Big[\prod_{j=1}^ne^{i\langle p_j, B^H_{s_j}-B^H_{r_j}\rangle}\Big]dp dr ds\\
&\leq C\int_{\tilde{D}^{n}}\prod_{j=1}^n1_{[t,\tilde{t}]}(s_j)\int_{\mathbb{R}^{nd}}\prod_{j=1}^n|p_j^k|\mathbb{E}\Big[\prod_{j=1}^ne^{i\langle p_j, B^H_{s_j}-B^H_{r_j}\rangle}\Big]dp dr ds\\
&\leq C|t-\tilde{t}|^{n\beta}\Big(\int_{\tilde{D}^{n}}\Big(\int_{\mathbb{R}^{nd}}\prod_{j=1}^n|p_j^k|\mathbb{E}\Big[\prod_{j=1}^ne^{i\langle p_j, B^H_{s_j}-B^H_{r_j}\rangle}\Big]dp\Big)^{\frac1{1-\beta}}dr ds\Big)^{1-\beta}\\
&=:C|t-\tilde{t}|^{n\beta}\Lambda_2,
\end{align*}
where we use the H\"{o}lder's inequality in the last inequality with $\beta<1-H|k|-Hd$.

Using the  similar methods as in the proof of Theorem \ref{sec1-th Lp}, $\Lambda_2$ is bounded by
\begin{align*}
\Big(\int_{E^n}\prod_{j\in \AA}(l_{j+1}-l_j)^{-\frac{Hd}{1-\beta}-\frac{H}{2(1-\beta)}|k\alpha_j|}\prod_{\AA^c}(l_{j+1}-l_j)^{-\frac{H}{2(1-\beta)}|k\alpha_j|} dl\Big)^{1-\beta}.
\end{align*}

Since $1-\beta>H(|k|+d)$, there exists a constant $C>0$, such that
\begin{align*}
\left|\mathbb{E}\Big[\Big(\widehat{\alpha}^{(k)}_{t}(y)-\widehat{\alpha}^{(k)}_{\tilde{t}}(y)\Big)^n\Big]\right|
\leq C|t-\tilde{t}|^{n\beta}.
\end{align*}

\subsection{Proof of Theorem \ref{sec1-th sup}}
In this section, we  mainly use the method  given in Jaramillo and Nualart \cite{Jaramillo 2017}.  We first give the chaos decomposition of the random variable $\widehat{\alpha}^{(k)}_{t,\varepsilon}(0)$ defined in \eqref{sec1-eq1.2} with $d=1$ and $k=1$. We write
$$\widehat{\alpha}^{'}_{t,\varepsilon}(0)=\int_0^t\int_0^s\alpha_{\varepsilon,s,r}drds,$$
where
$$
\alpha_{\varepsilon,s,r}=f'_{\varepsilon}(B_s^H-B_r^H)=\sum_{q=1}^\infty I_{2q-1}(f_{2q-1,\varepsilon,s,r})
$$
with
$$f_{2q-1,\varepsilon,s,r}(x_1,\cdots,x_{2q-1})=(-1)^q\beta_q\Big(\varepsilon+(s-r)^{2H}\Big)^{-q-\frac12}\prod_{j=1}^{2q-1}\mathds{1}_{[r,s]}(x_j)$$
and
$$\beta_q=\frac1{2^{q-\frac12}(q-1)!\sqrt{\pi}}.$$

Then, $\widehat{\alpha}^{'}_{t,\varepsilon}(0)$ has the following chaos decomposition
$$
\widehat{\alpha}^{'}_{t,\varepsilon}(0)=\sum_{q=1}^\infty I_{2q-1}(f_{2q-1,\varepsilon}),
$$
where
$$f_{2q-1,\varepsilon}(x_1,\cdots,x_{2q-1})=\int_{D}f_{2q-1,\varepsilon,s,r}(x_1,\cdots,x_{2q-1})drds$$
with $D=\{(r,s): 0<r<s<t\}$.

For $q=1$,
\begin{equation}\label{sec2-eq2.3}
\mathbb{E}\Big[\Big|I_1(f_{1,\varepsilon})\Big|^2\Big]=\int_{D^2}\langle f_{1,\varepsilon,s_1,r_1},f_{1,\varepsilon,s_2,r_2}\rangle_{\mathfrak{H}}dr_1dr_2ds_1ds_2,
\end{equation}
where $\mathfrak{H}$ is the Hilbert space obtained by taking the completion of the step functions endowed with the inner product
$$\langle \mathds{1}_{[a,b]}, \mathds{1}_{[c,d]}\rangle_{\mathfrak{H}}:=\mathbb{E}[(B_b^H-B_a^H)(B_d^H-B_c^H)].$$

For $q>1$, we have to describe the terms $\langle f_{2q-1,\varepsilon,s_1,r_1},f_{2q-1,\varepsilon,s_2,r_2}\rangle_{\mathfrak{H}^{\otimes(2q-1)}}$,
where $\mathfrak{H}^{\otimes(2q-1)}$ is the $(2q-1)$-th tensor product of $\mathfrak{H}$. For every $x, ~u_1, ~u_2>0$, we define
$$\mu(x,u_1,u_2)=\mathbb{E}[B_{u_1}^H(B_{x+u_2}^H-B_x^H)].$$

Then, from equation (2.19) in  Jaramillo and Nualart \cite{Jaramillo 2017},
$$
\langle f_{2q-1,\varepsilon,s_1,r_1},f_{2q-1,\varepsilon,s_2,r_2}\rangle_{\mathfrak{H}^{\otimes(2q-1)}}=\beta_q^2G^{(q)}_{\varepsilon,r_2-r_1}(s_1-r_1,s_2-r_2),
$$
where
$$G^{(q)}_{\varepsilon,x}(u_1,u_2)=\Big(\varepsilon+u_1^{2H}\Big)^{-\frac12-q}\Big(\varepsilon+u_2^{2H}\Big)^{-\frac12-q}\mu(x,u_1,u_2)^{2q-1}.$$

Before we give the proof of the main result, we give some useful lemmas below. In the sequel, we just consider the case $H=\frac23$.

\begin{lemma} \label{sec3-lem3.4}
$$
\lim_{\varepsilon\to0}\mathbb{E}\Big[\Big|\frac{1}{\log \frac1\varepsilon}\widehat{\alpha}^{'}_{t,\varepsilon}(0)\Big|^2\Big]=\sigma^2.
$$
\end{lemma}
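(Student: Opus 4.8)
\textbf{Proof proposal for Lemma \ref{sec3-lem3.4}.}

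The plan is to compute the second moment directly from the chaos decomposition and show that only the first chaos ($q=1$) contributes to the logarithmic order, while all higher chaoses ($q\ge 2$) give contributions of strictly smaller order. By orthogonality of the multiple Wiener--It\^o integrals,
\begin{equation*}
\mathbb{E}\Big[\big|\widehat{\alpha}^{'}_{t,\varepsilon}(0)\big|^2\Big]=\sum_{q=1}^\infty \mathbb{E}\Big[\big|I_{2q-1}(f_{2q-1,\varepsilon})\big|^2\Big]=\sum_{q=1}^\infty (2q-1)!\,\beta_q^2\int_{D^2}G^{(q)}_{\varepsilon,r_2-r_1}(s_1-r_1,s_2-r_2)\,dr_1dr_2ds_1ds_2,
\end{equation*}
so after dividing by $(\log\frac1\varepsilon)^2$ it suffices to prove two things: first, that the $q=1$ term behaves like $\sigma^2(\log\frac1\varepsilon)^2(1+o(1))$, and second, that $\sum_{q\ge 2}(2q-1)!\,\beta_q^2\int_{D^2}G^{(q)}_{\varepsilon,\cdot}\,drds = o\big((\log\frac1\varepsilon)^2\big)$.

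For the $q=1$ term, I would substitute $H=\frac23$ and use the local nondeterminism / scaling estimates on $\mu(x,u_1,u_2)$ from Lemma \ref{sec3-lem3.2} (Case (i), (ii), (iii), here with $2H=\tfrac43$) to locate the region of $(r_1,r_2,s_1,s_2)$ that produces the divergence. The key point is that $\big(\varepsilon+u_1^{2H}\big)^{-1/2-1}\big(\varepsilon+u_2^{2H}\big)^{-1/2-1}\mu^{1}$ is, after the change of variables to gap coordinates and a rescaling by $\varepsilon^{1/(2H)}=\varepsilon^{3/4}$, integrable except for a logarithmically divergent piece coming from the diagonal-type region where the two intervals $[r_1,s_1]$ and $[r_2,s_2]$ are nearly disjoint and short; integrating out the ``fast'' variables against a convergent integral and the ``slow'' variable (the common time-scale) over a range of length $\sim\log\frac1\varepsilon$ yields the $(\log\frac1\varepsilon)^2$ asymptotics. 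Carefully tracking the constant — the Gaussian normalization $\beta_1^2=\frac{1}{2\sqrt\pi}$, the factor $1!=1$, the $\frac{1}{(2\pi)}$-type constants absorbed earlier, and the remaining one-dimensional time integral over $D$ which produces $t^{4/3}B(2,1/3)$ — gives the stated $\sigma^2=\frac{t^{4/3}}{8\pi}B(2,1/3)$. This constant-chasing, done in parallel with Jaramillo--Nualart \cite{Jaramillo 2017} and \cite{Jaramillo 2019}, is the step I expect to be the most delicate; one has to be scrupulous about which constants were already pulled out in the definition of $\widehat{\alpha}^{'}_{t,\varepsilon}(0)$ and the chaos coefficients.

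For the higher chaoses, the plan is to bound $|\mu(x,u_1,u_2)|\le C\,(u_1 u_2)^{H}$ (or the sharper bounds from Lemma \ref{sec3-lem3.2}) so that
\begin{equation*}
G^{(q)}_{\varepsilon,x}(u_1,u_2)\le C\,\big(\varepsilon+u_1^{2H}\big)^{-1/2-q}\big(\varepsilon+u_2^{2H}\big)^{-1/2-q}(u_1u_2)^{H(2q-1)}=C\,\phi_q(u_1)\phi_q(u_2),
\end{equation*}
with $\phi_q(u)=\big(\varepsilon+u^{2H}\big)^{-1/2-q}u^{H(2q-1)}$, and then to show that $\sum_{q\ge2}(2q-1)!\,\beta_q^2\big(\int_0^t\int_0^s \phi_q(s-r)\,drds\big)^2$ — which, since the $x$-dependence has been dropped, decouples the $r_2-r_1$ integration and loses a possible extra factor but is still an upper bound after integrating the $x$ variable over $[0,t]$ — is of order at most $O(\log\frac1\varepsilon)$, hence negligible after dividing by $(\log\frac1\varepsilon)^2$. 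Here one uses that for $H=\frac23$ the exponent $H(2q-1)-2H(\frac12+q)=-H-2H q + H(2q-1)$... more precisely $2H(1/2+q)-H(2q-1)=H+H=2H>1$ strictly when $q\ge 2$ is not the borderline value, so each $\int_0^t\phi_q(u)\,du$ converges to a finite limit (no log) as $\varepsilon\to0$ for $q\ge 2$, while the summability in $q$ follows from $(2q-1)!\,\beta_q^2=\frac{(2q-1)!}{2^{2q-1}((q-1)!)^2\pi}$ decaying fast enough to beat the polynomial growth of the $u$-integrals; a dominated-convergence argument then gives that the whole tail is $O(1)$, in any case $o((\log\frac1\varepsilon)^2)$. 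Combining the two estimates and dividing by $(\log\frac1\varepsilon)^2$ yields the claimed limit $\sigma^2$. The main obstacle, as noted, is the exact constant in the $q=1$ asymptotics; the tail estimate is routine once the convergence exponents are checked.
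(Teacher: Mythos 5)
Your overall strategy (the first chaos carries the $(\log\frac1\varepsilon)^2$ divergence, the higher chaoses are negligible) is consistent with what is true, but the way you propose to control the tail $q\ge 2$ contains a genuine error, and it is precisely the step the paper is organized to avoid. If you bound $|\mu(x,u_1,u_2)|\le C(u_1u_2)^H$ and decouple the two time intervals, then with $H=\frac23$ the resulting $\phi_q(u)=(\varepsilon+u^{4/3})^{-1/2-q}u^{\frac23(2q-1)}$ has limiting integrand $u^{-2H}=u^{-4/3}$ near $u=0$ for every $q$, which is \emph{not} integrable; the criterion for $\int_0^t u^{-2H}\,du<\infty$ is $2H<1$, and you have the inequality backwards when you assert that ``$2H>1$ \dots\ so each $\int_0^t\phi_q(u)\,du$ converges to a finite limit.'' A rescaling $u=\varepsilon^{3/4}v$ shows $\int_0^t\phi_q(u)\,du\asymp\varepsilon^{-1/4}$, hence your decoupled bound on each chaos term is of order $\varepsilon^{-1/2}$, which overwhelms $(\log\frac1\varepsilon)^2$. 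The loss comes from discarding the decay of $\mu$ in the separation variable $x$ (for well-separated intervals $\mu\approx H(2H-1)u_1u_2x^{2H-2}$), which is exactly what turns a power divergence into a logarithm; note that the same decoupled bound applied to $q=1$ would also give $\varepsilon^{-1/2}$ instead of the true $(\log\frac1\varepsilon)^2$, so the defect cannot be repaired by summability in $q$.

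The paper takes a different and simpler route for this lemma: it does not estimate the chaoses $q\ge2$ at all. It writes the full second moment as the explicit Gaussian integral $\frac1\pi\int|\varepsilon I+\Sigma|^{-3/2}\mu$, splits the time domain into the three orderings $D_1,D_2,D_3$ of Lemma \ref{sec3-lem3.2}, shows $V_1(\varepsilon)$ and $V_2(\varepsilon)$ are $o((\log\frac1\varepsilon)^2)$, and extracts the limit $\sigma^2$ from $V_3$ (disjoint intervals) by the change of variables $(a,c)\mapsto(\varepsilon^{3/4}a,\varepsilon^{3/4}c)$, each of the two rescaled integrals contributing one factor of $\log\frac1\varepsilon$. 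The negligibility of $\sum_{q\ge2}I_{2q-1}(f_{2q-1,\varepsilon})$ is then obtained for free in the proof of Theorem \ref{sec1-th sup} by orthogonality, as the difference between Lemma \ref{sec3-lem3.4} (total variance) and Lemma \ref{sec3-lem3.5} (first-chaos variance). If you wish to keep your decomposition, you must retain the $x$-dependence of $\mu$ inside $G^{(q)}_{\varepsilon,x}$ and carry out a $V_3$-type analysis for each $q$, which is essentially as much work as the paper's direct computation; your $q=1$ asymptotics, by contrast, is in the right spirit and matches the computation in Lemma \ref{sec3-lem3.5}.
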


\begin{proof}
From Lemma 5.1 in Jaramillo and Nualart \cite{Jaramillo 2017}, we can see
$$
\mathbb{E}\Big[\Big|\frac{1}{\log \frac1\varepsilon}\widehat{\alpha}^{'}_{t,\varepsilon}(0)\Big|^2\Big]=\frac{1}{(\log \frac1\varepsilon)^2}\Big(V_1(\varepsilon)+V_2(\varepsilon)+V_3(\varepsilon)\Big)
$$
and
\begin{align*}
V_i(\varepsilon)=\frac1{\pi}\int_{D_i}|\varepsilon I+\Sigma|^{-\frac32}\mu drdsdr'ds'
\end{align*}
where $D_i$ (i=1, 2, 3) defined in Lemma \ref{sec3-lem3.2}
and $\Sigma$ is the covariance matrix of $(B^H_s-B^H_r,B^H_{s'}-B^H_{r'})$ with $\Sigma_{1,1}=\lambda$, $\Sigma_{2,2}=\rho$,  $\Sigma_{1,2}=\mu$ given in Lemma \ref{sec3-lem3.2}.

Next, we will split the proof into three parts to consider $V_1(\varepsilon)$, $V_2(\varepsilon)$ and $V_3(\varepsilon)$,  respectively.

\textbf{For the $V_1(\varepsilon)$ term}, changing the coordinates $(r, r', s, s')$ by $(r, a=r'-r, b=s-r', c=s'-s)$ and integrating the $r$ variable, we get
\begin{align*}
V_1(\varepsilon)
&\leq\frac1{\pi}\int_{[0,t]^4}|\varepsilon I+\Sigma|^{-\frac32}\mu drdadbdc\\
&=\frac{t}{\pi}\int_{[0,t]^3}|\varepsilon I+\Sigma|^{-\frac32}\mu dadbdc\\
&=:\widetilde{V_1}(\varepsilon).
\end{align*}

Since
$$\mu=\frac12((a+b+c)^{\frac43}+b^{\frac43}-a^{\frac43}-c^{\frac43})\leq\sqrt{\lambda\rho}=(a+b)^{\frac23}(b+c)^{\frac23}$$
and
\begin{align*}
|\varepsilon I+\Sigma|&=(\varepsilon+\Sigma_{1,1})(\varepsilon+\Sigma_{2,2})-\Sigma^2_{1,2}\\
&\geq C\Big[\varepsilon^2+\varepsilon((a+b)^{\frac43}+(b+c)^{\frac43})+a^{\frac43}(c+b)^{\frac43}+c^{\frac43}(a+b)^{\frac43}\Big]\\
&\geq C\Big[\varepsilon^2+(a+b)^{\frac23}(b+c)^{\frac23}(\varepsilon+(ac)^{\frac23})\Big]\\
&\geq C(a+b)^{\frac23}(b+c)^{\frac23}(\varepsilon+(ac)^{\frac23}),
\end{align*}
where we use the Young's inequality in the second to last inequality.

Then, we have
$$\widetilde{V_1}(\varepsilon)\leq C\int_{[0,t]^3}(a+b)^{-\frac{1}{3}}(b+c)^{-\frac{1}{3}}\Big(\varepsilon+(ac)^{\frac23}\Big)^{-\frac{3}{2}}dadbdc.$$

We will estimate this integral over the regions $\{b\leq (a\vee c)\}$ and $\{b > (a\vee c)\}$ separately, and we will denote these two integrals by $\widetilde{V_{1,1}}$ and $\widetilde{V_{1,2}}$, respectively. If $b\leq (a\vee c)$, without loss of generality, we can assume $a\geq c$ and thus $b\leq a$. For a given small enough constant $\varepsilon_1>0$,

\begin{align*}
\widetilde{V_{1,1}}(\varepsilon)&\leq C\int_{[0,t]^3}(a+b)^{-\frac{1}{3}-\varepsilon_1}(b+c)^{-\frac{1}{3}}\frac{(a+b)^{\varepsilon_1}}{a^{\varepsilon_1}}a^{\varepsilon_1}\Big(\varepsilon+(ac)^{\frac23}\Big)^{-\frac{3}{2}}dadbdc\\
&\leq C\int_{[0,t]^3}b^{-\frac23-\varepsilon_1}a^{\varepsilon_1}\Big(\varepsilon+(ac)^{\frac23}\Big)^{-\frac32}dadbdc\\
&\leq C\int_0^t\int_0^{t\varepsilon^{-\frac32}}a^{\varepsilon_1}\Big(1+(ac)^{\frac23}\Big)^{-\frac32}dcda,
\end{align*}
where we make the change of variable $c=c\,\varepsilon^{-\frac32}$ in the last inequality.

By L'H\^{o}spital's rule, we have
$$
\limsup_{\varepsilon\to0}\frac{\widetilde{V_{1,1}}(\varepsilon)}{\log \frac1\varepsilon}\leq C<\infty.
$$

If $b>(a\vee c)$, we can see that
$$\mu=\frac12((a+b+c)^{\frac43}+b^{\frac43}-a^{\frac43}-c^{\frac43})\leq C\, b^{\frac43}$$
and
\begin{align*}
|\varepsilon I+\Sigma|
&\geq C\Big[\varepsilon^2+\varepsilon((a+b)^{\frac43}+(b+c)^{\frac43})+a^{\frac43}(c+b)^{\frac43}+c^{\frac43}(a+b)^{\frac43}\Big]\\
&\geq C\, b^{\frac43}(\varepsilon+(a\vee c)^{\frac43}).
\end{align*}

Then
\begin{align*}
\limsup_{\varepsilon\to0}\frac{\widetilde{V_{1,2}}(\varepsilon)}{\log \frac1\varepsilon}&\leq \limsup_{\varepsilon\to0}\frac{C}{\log \frac1\varepsilon}\int_{[0,t]^3}b^{-\frac23}\Big(\varepsilon+(a\vee c)^{\frac43}\Big)^{-\frac{3}{2}}dadbdc\\
&\leq \limsup_{\varepsilon\to0}\frac{C}{\log \frac1\varepsilon}\int_0^t\int_0^a(\varepsilon+a^{\frac43})^{-\frac32}dcda\\
&=\limsup_{\varepsilon\to0}\frac{C}{\log \frac1\varepsilon}\int_0^ta(\varepsilon+a^{\frac43})^{-\frac32}da<\infty.
\end{align*}
So, by the above result, we can obtain
\begin{equation}\label{sec3-eq3.7}
\lim_{\varepsilon\to0}\frac{V_{1}(\varepsilon)}{(\log\frac1\varepsilon)^2}=0.
\end{equation}

\textbf{For the $V_2(\varepsilon)$ term}, changing the coordinates $(r, r', s, s')$ by $(r, a=r'-r, b=s'-r', c=s-s')$ and integrating the $r$ variable, we get
\begin{align*}
V_2(\varepsilon)
\leq\frac{t}{\pi}\int_{[0,t]^3}|\varepsilon I+\Sigma|^{-\frac32}\mu dadbdc=:\widetilde{V_2}(\varepsilon).
\end{align*}

By
\begin{align*}
\mu&=\frac12\Big((a+b)^{\frac43}+(b+c)^{\frac43}-a^{\frac43}-c^{\frac43}\Big)\\
&=\frac{2b}3\int_0^1\left((a+bv)^{\frac13}+(c+bv)^{\frac13}\right)dv\\
&\leq \frac43b(a+b+c)^{\frac13}
\end{align*}
and
$$|\varepsilon I+\Sigma|=(\varepsilon+\Sigma_{1,1})(\varepsilon+\Sigma_{2,2})-\Sigma^2_{1,2}\geq \varepsilon^2+\varepsilon((a+b+c)^{\frac43}+b^{\frac43})+C\, b^{\frac43}(a^{\frac43}+c^{\frac43}),$$
we have
$$\widetilde{V_2}(\varepsilon)\leq C\int_{[0,t]^3}b(a+b+c)^{\frac13}\Big(\varepsilon((a+b+c)^{\frac43}+b^{\frac43})+ b^{\frac43}(a^{\frac43}+c^{\frac43})\Big)^{-\frac32}dadbdc.$$

We again estimate this integral over the regions $\{b\leq (a\vee c)\}$ and $\{b > (a\vee c)\}$ separately, and denote these two integrals by $\widetilde{V_{2,1}}$ and $\widetilde{V_{2,2}}$, respectively. If $b\leq (a\vee c)$,
\begin{align*}
\widetilde{V_{2,1}}(\varepsilon)&\leq C\int_{[0,t]^3}b(a\vee c)^{\frac13}\Big(\varepsilon(a\vee c)^{\frac43}+b^{\frac43}(a\vee c)^{\frac43}\Big)^{-\frac{3}{2}}dadbdc\\
&\leq C\int_{[0,t]^3}b(a\vee c)^{-\frac52}\Big(\varepsilon+b^{\frac43}\Big)^{-\frac32}dadbdc\\
&\leq C \int_0^tb\Big(\varepsilon+b^{\frac43}\Big)^{-\frac32}db\\
&\leq C\int_0^{t\varepsilon^{-\frac34}}b(1+b^{\frac43})^{-\frac32}db.
\end{align*}

Thus,
\begin{equation}\label{sec3-eq3.8}
\begin{split}
\limsup_{\varepsilon\to0}\frac{\widetilde{V_{2,1}}(\varepsilon)}{\log \frac1\varepsilon}\leq C\limsup_{\varepsilon\to0}\varepsilon^{-\frac3{2}}(1+t^{\frac43}\varepsilon^{-1})^{-\frac32}<\infty.
\end{split}
\end{equation}

If $b>(a\vee c)$, similarly, we have
\begin{equation}\label{sec3-eq3.9}
\begin{split}
\limsup_{\varepsilon\to0}\frac{\widetilde{V_{2,2}}(\varepsilon)}{\log \frac1\varepsilon}
&\leq \limsup_{\varepsilon\to0}\frac{C}{\log \frac1\varepsilon}\int_0^tb^{-\frac23}db\int_{[0,t]^2}\Big(\varepsilon+(a\vee c)^{\frac43}\Big)^{-\frac32}dadc\\
&\leq \limsup_{\varepsilon\to0}\frac{C}{\log \frac1\varepsilon}\int_0^t\int_0^a(\varepsilon+a^{\frac43})^{-\frac32}dcda\\
&=\limsup_{\varepsilon\to0}\frac{C}{\log \frac1\varepsilon}\int_0^ta(\varepsilon+a^{\frac43})^{-\frac32}da<\infty.
\end{split}
\end{equation}
So, by the above result, we can obtain
$$
\lim_{\varepsilon\to0}\frac{V_{2}(\varepsilon)}{(\log\frac1\varepsilon)^2}=0.
$$

\textbf{For the $V_3(\varepsilon)$ term}.  We first change the coordinates $(r, r', s, s')$ by $(r, a=s-r, b=r'-s, c=s'-r')$ and then by
\begin{align*}
\mu&=\frac12\Big((a+b+c)^{\frac43}+b^{\frac43}-(b+c)^{\frac43}-(a+b)^{\frac43}\Big)\\
&=\frac29ac\int_0^1\int_0^1(b+ax+cy)^{-\frac23}dxdy\\
&=:\mu(a+b,a,c),
\end{align*}
and $|\varepsilon I+\Sigma|=\varepsilon^2+\varepsilon(a^{\frac43}+c^{\frac43})+(ac)^{\frac43}-\mu(a+b,a,c)^2$,
we can find
\begin{align*}
V_3(\varepsilon)
&=\frac{1}{\pi}\int_{[0,t]^3}\mathds{1}_{(0,t)}(a+b+c)(t-a-b-c)\mu|\varepsilon I+\Sigma|^{-\frac32}dadbdc\\
&=\frac1{\pi} \int_{[0,t\varepsilon^{-\frac3{4}}]^2\times[0,t]}\mathds{1}_{(0,t)}(b+\varepsilon^{\frac3{4}}(a+c))\\
&\qquad\qquad\times \frac{(t-b-\varepsilon^{\frac3{4}}(a+c))\varepsilon^{-\frac3{2}}\mu(\varepsilon^{\frac3{4}}a+b,\varepsilon^{\frac3{4}}a,\varepsilon^{\frac3{4}}c)}{\Big[(1+a^{\frac43})(1+c^{\frac43})-\varepsilon^{-2}\mu(\varepsilon^{\frac3{4}}a+b,\varepsilon^{\frac3{4}}a,\varepsilon^{\frac3{4}}c)^2\Big]^{\frac32}}dbdadc,
\end{align*}
where we change the coordinates $(a,b,c)$ by $(\varepsilon^{-\frac3{4}}a, b, \varepsilon^{-\frac3{4}}c)$ in the last equality.

By the definition of $\mu(a+b,a,c)$, it is easy to find
\begin{align*}
\mu(\varepsilon^{\frac3{4}}a+b,\varepsilon^{\frac3{4}}a,\varepsilon^{\frac3{4}}c)&= \frac29
\varepsilon^{\frac3{2}}ac\int_{[0,1]^2}(b+\varepsilon^{\frac3{4}}av_1+\varepsilon^{\frac3{4}}cv_2)^{-\frac23}dv_1dv_2
\end{align*}
and
$$\varepsilon^{-\frac3{2}}\mu(\varepsilon^{\frac3{4}}a+b,\varepsilon^{\frac3{4}}a,\varepsilon^{\frac3{4}}c)=\frac29acb^{-\frac23}+O(\varepsilon^{\frac3{4}}ac(a+c)).$$

The other part of the integrand in $V_3(\varepsilon)$ is
\begin{align*}
&\Big[(1+a^{\frac43})(1+c^{\frac43})-\varepsilon^{-2}\mu(\varepsilon^{\frac3{4}}a+b,\varepsilon^{\frac3{4}}a,\varepsilon^{\frac3{4}}c)^2\Big]^{-\frac32}\\
&\qquad=\Big[(1+a^{\frac43})(1+c^{\frac43})\Big]^{-\frac32}+O\Big(\varepsilon a^2c^2[(1+a^{\frac43})(1+c^{\frac43})]^{-\frac52}\Big).
\end{align*}

Since
\begin{equation}\label{sec3-eq3.11-}
\begin{split}
&\frac{1}{(\log \frac1\varepsilon)^2}\int_{[0,t\varepsilon^{-\frac3{4}}]^2}\varepsilon^{\frac3{4}}ac(a+c)\Big[(1+a^{\frac43})(1+c^{\frac43})\Big]^{-\frac32}dadc\\
&\qquad+\frac{1}{(\log \frac1\varepsilon)^2}\int_{[0,t\varepsilon^{-\frac3{4}}]^2}\varepsilon a^3c^3\Big[(1+a^{\frac43})(1+c^{\frac43})\Big]^{-\frac52}dadc\\
&\to0,
\end{split}
\end{equation}
as $\varepsilon\to0$. Then, by L'H\^{o}spital's rule, we have
\begin{equation}\label{sec3-eq3.11}
\begin{split}
\lim_{\varepsilon\to0}\frac{V_3(\varepsilon)}{(\log \frac1\varepsilon)^2}&=\frac{2}{9\pi}\int_0^t(t-b)b^{-\frac23}db\\
&\qquad\qquad\times\lim_{\varepsilon\to0}\frac{1}{(\log \frac1\varepsilon)^2}\int_{[0,t\varepsilon^{-\frac3{4}}]^2}ac\Big[(1+a^{\frac43})(1+c^{\frac43})\Big]^{-\frac32}dadc\\
&=\frac{t^{\frac43}}{8\pi }B\left(2,\frac13\right).
\end{split}
\end{equation}

Together \eqref{sec3-eq3.7}--\eqref{sec3-eq3.11}, we can see
$$
\lim_{\varepsilon\to0}\mathbb{E}\Big[\Big|\frac{1}{\log \frac1\varepsilon}\widehat{\alpha}^{'}_{t,\varepsilon}(0)\Big|^2\Big]=\frac{t^{\frac43}}{8\pi }B\left(2,\frac13\right)=:\sigma^2.
$$
\end{proof}

\begin{lemma} \label{sec3-lem3.5}
For $I_1(f_{1,\varepsilon})$ given in \eqref{sec2-eq2.3}, then
$$
\lim_{\varepsilon\to0}\mathbb{E}\Big[\Big|\frac{1}{\log \frac1\varepsilon}I_1(f_{1,\varepsilon})\Big|^2\Big]=\sigma^2.
$$
\end{lemma}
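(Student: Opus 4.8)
The plan is to obtain Lemma~\ref{sec3-lem3.5} by isolating, inside the variance computation, the contribution of the first chaos, following the scheme of the proof of Lemma~\ref{sec3-lem3.4}. By \eqref{sec2-eq2.3}, the identity $\langle f_{1,\varepsilon,s_1,r_1},f_{1,\varepsilon,s_2,r_2}\rangle_{\mathfrak H}=\beta_1^2\,G^{(1)}_{\varepsilon,r_2-r_1}(s_1-r_1,s_2-r_2)$ and $\beta_1^2=\tfrac1{2\pi}$, together with the symmetry $\int_{D^2}=2\int_{D_1\cup D_2\cup D_3}$ used (as in the proof of Lemma~\ref{sec3-lem3.4}) to pass from $D^2$ to the configurations $D_1,D_2,D_3$ of Lemma~\ref{sec3-lem3.2}, one obtains
\begin{equation*}
\mathbb{E}\big[|I_1(f_{1,\varepsilon})|^2\big]=\frac1\pi\sum_{i=1}^3 W_i(\varepsilon),\qquad W_i(\varepsilon):=\int_{D_i}(\varepsilon+\lambda)^{-\frac32}(\varepsilon+\rho)^{-\frac32}\,\mu\;drdsdr'ds',
\end{equation*}
with $\lambda,\rho,\mu$ as in Lemma~\ref{sec3-lem3.2}. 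This is the exact first-chaos analogue of the decomposition $\mathbb{E}[|\widehat\alpha'_{t,\varepsilon}(0)|^2]=V_1(\varepsilon)+V_2(\varepsilon)+V_3(\varepsilon)$, $V_i(\varepsilon)=\tfrac1\pi\int_{D_i}|\varepsilon I+\Sigma|^{-3/2}\mu$, used there; indeed, since $\mu\ge0$ (because $H=\tfrac23>\tfrac12$) and $(\varepsilon+\lambda)(\varepsilon+\rho)\ge(\varepsilon+\lambda)(\varepsilon+\rho)-\mu^2=|\varepsilon I+\Sigma|$, one has the pointwise bound $W_i(\varepsilon)\le\pi V_i(\varepsilon)$ for every $i$.

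For $i=1,2$ this already gives $\tfrac1\pi W_i(\varepsilon)\le V_i(\varepsilon)=o\big((\log\tfrac1\varepsilon)^2\big)$, by \eqref{sec3-eq3.7} and the analogous estimate for $V_2$ established in the proof of Lemma~\ref{sec3-lem3.4}. For $i=3$ I would repeat verbatim the steps carried out for $V_3(\varepsilon)$: the change of variables $(r,r',s,s')\mapsto(r,a=s-r,b=r'-s,c=s'-r')$, the integration in $r$, and the rescaling $(a,c)\mapsto(\varepsilon^{3/4}a,\varepsilon^{3/4}c)$, followed by the L'H\^{o}pital argument \eqref{sec3-eq3.11-}--\eqref{sec3-eq3.11}. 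The only difference is that the denominator appearing in $W_3(\varepsilon)$ after rescaling is $[(1+a^{4/3})(1+c^{4/3})]^{3/2}$ rather than $[(1+a^{4/3})(1+c^{4/3})-\varepsilon^{-2}\mu(\varepsilon^{3/4}a+b,\varepsilon^{3/4}a,\varepsilon^{3/4}c)^2]^{3/2}$; since the subtracted correction is $O(\varepsilon\,a^2c^2b^{-4/3})\to0$ and is absorbed exactly as in \eqref{sec3-eq3.11-}, the limit computation \eqref{sec3-eq3.11} goes through unchanged and yields $\lim_{\varepsilon\to0}W_3(\varepsilon)/(\log\tfrac1\varepsilon)^2=\lim_{\varepsilon\to0}\pi V_3(\varepsilon)/(\log\tfrac1\varepsilon)^2=\pi\sigma^2$. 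Combining, $\mathbb{E}[|I_1(f_{1,\varepsilon})|^2]/(\log\tfrac1\varepsilon)^2\to\tfrac1\pi(0+0+\pi\sigma^2)=\sigma^2$, which is the claim.

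The step I expect to be the main obstacle is the $D_3$ analysis, specifically the uniform control of the rescaled integrand needed to justify the passage to the limit — in particular its behaviour near $b=0$, where the first-order expansion $\mu\approx\tfrac29\,ac\,b^{-2/3}$ used for $V_3$ degenerates. There one must instead use the uniform bound $\mu=\tfrac29\,ac\int_0^1\!\!\int_0^1(b+au+cv)^{-2/3}\,dudv\le C\,ac\,(b+a\vee c)^{-2/3}$ (so in particular $\mu\le(ac)^{2/3}$), which keeps the relevant $b$-integral $\int_0^t(t-b)(b+a\vee c)^{-2}\,db$ bounded after rescaling. An alternative route, more robust in that it uses Lemma~\ref{sec3-lem3.4} as a black box, is to write $R_\varepsilon:=\mathbb{E}[|\widehat\alpha'_{t,\varepsilon}(0)|^2]-\mathbb{E}[|I_1(f_{1,\varepsilon})|^2]=\sum_{q\ge2}(2q-1)!\,\beta_q^2\int_{D^2}G^{(q)}\ge0$ and to show $R_\varepsilon=o\big((\log\tfrac1\varepsilon)^2\big)$, after which Lemma~\ref{sec3-lem3.4} gives the result immediately; the $D_1\cup D_2$-part of $R_\varepsilon$ is $\le V_1(\varepsilon)+V_2(\varepsilon)=o\big((\log\tfrac1\varepsilon)^2\big)$ by simply dropping the (nonnegative) $q=1$ term, while on $D_3$ the power-series identity $\sum_{q\ge1}(2q-1)!\,\beta_q^2\,z^{2q-1}=\beta_1^2\,z(1-z^2)^{-3/2}$ (with $z=\mu\big((\varepsilon+\lambda)(\varepsilon+\rho)\big)^{-1/2}$) gives $\sum_{q\ge2}(2q-1)!\,\beta_q^2\,G^{(q)}\le C\,\mu^3\,|\varepsilon I+\Sigma|^{-5/2}$, whose integral over $D_3$ is $O(1)$ by the same rescaling (the extra power of $\mu$ producing a gain of order $\varepsilon$, up to the near-$b=0$ point just discussed).
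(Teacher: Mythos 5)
Your argument is correct and is essentially the proof the paper gives: the paper likewise writes $\mathbb{E}[|I_1(f_{1,\varepsilon})|^2]=V_1^{(1)}(\varepsilon)+V_2^{(1)}(\varepsilon)+V_3^{(1)}(\varepsilon)$ with $V_i^{(1)}(\varepsilon)=2\int_{D_i}\langle f_{1,\varepsilon,s_1,r_1},f_{1,\varepsilon,s_2,r_2}\rangle_{\mathfrak H}$ (which equals your $\tfrac1\pi W_i(\varepsilon)$), bounds $0\le V_i^{(1)}(\varepsilon)\le V_i(\varepsilon)$ exactly via $\mu\ge0$ and $(\varepsilon+\lambda)(\varepsilon+\rho)\ge|\varepsilon I+\Sigma|$ to dispose of $D_1$ and $D_2$, and then recomputes the $D_3$ term by the same change of variables, $\varepsilon^{3/4}$-rescaling and L'H\^opital step as for $V_3(\varepsilon)$, arriving at the same limit $\sigma^2$. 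Your closing remarks (the uniform bound near $b=0$ and the alternative via $R_\varepsilon=\sum_{q\ge2}$) are reasonable supplements but are not needed for, and do not change, the core argument.
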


\begin{proof}
Form \eqref{sec2-eq2.3}, we can find
$$
\mathbb{E}\Big[\Big|\frac{1}{\log \frac1\varepsilon}I_1(f_{1,\varepsilon})\Big|^2\Big]=\frac{1}{(\log \frac1\varepsilon)^2}\Big(V_1^{(1)}(\varepsilon)+V_2^{(1)}(\varepsilon)+V_3^{(1)}(\varepsilon)\Big),
$$
where $V_i^{(1)}(\varepsilon)=2\int_{D_i}\langle f_{1,\varepsilon,s_1,r_1},f_{1,\varepsilon,s_2,r_2}\rangle_{\mathfrak{H}}dr_1dr_2ds_1ds_2$
for $i=1, 2, 3$. Then we have
\begin{equation} \label{sec3-eq3.15}
0\leq V_i^{(1)}(\varepsilon)\leq V_i(\varepsilon),
\end{equation}
since  $H>\frac12$ and $\mu$ can only take positive values.

Combining  \eqref{sec3-eq3.15} with \eqref{sec3-eq3.7}--\eqref{sec3-eq3.9}, we can see
$$
\lim_{\varepsilon\to0}\frac{1}{(\log \frac1\varepsilon)^2}\Big(V_1^{(1)}(\varepsilon)+V_2^{(1)}(\varepsilon)\Big)=0.
$$
Thus, we only need to consider $\frac{1}{(\log \frac1\varepsilon)^2}V_3^{(1)}(\varepsilon)$ as $\varepsilon\to0$.

By the proof of Lemma 5.7 in Jaramillo and Nualart \cite{Jaramillo 2017}, we have
\begin{align*}
V_3^{(1)}(\varepsilon)&=\frac1{\pi}\int_{S_3}G^{(1)}_{\varepsilon,r'-r}(s-r,s'-r')\\
&=\frac1{\pi}\int_{[0,t]^3}\int_0^{t-(a+b+c)}\mathds{1}_{(0,t)}(a+b+c)(\varepsilon+a^{\frac43})^{-\frac32}(\varepsilon+c^{\frac43})^{-\frac32}\mu(a+b,a,c)ds_1dadbdc\\
&=\frac{2}{9\pi}\int_0^t\int_{[0,t\varepsilon^{-\frac3{4}}]^2}\int_{[0,1]^2}\mathds{1}_{(0,t)}\Big((b+\varepsilon^{\frac3{4}}(a+c)\Big)\Big(t-b-\varepsilon^{\frac3{4}}(a+c)\Big)\\
&\qquad\qquad\qquad \times \Big[(1+a^{\frac43})(1+c^{\frac43})\Big]^{-\frac32}ac\Big(b+\varepsilon^{\frac3{4}}(av_1+cv_2)\Big)^{-\frac23}dv_1dv_2dadcdb.
\end{align*}
Note that
\begin{align*}
\int_{[0,1]^2}\Big(b+\varepsilon^{\frac3{4}}(av_1+cv_2)\Big)^{-\frac23}dv_1dv_2=b^{-\frac23}+O(\varepsilon^{\frac3{4}}(a+c))
\end{align*}
and
\begin{align*}
\int_{[0,1]^2}&\Big(t-b-\varepsilon^{\frac3{4}}(a+c)\Big) \Big[(1+a^{\frac43})(1+c^{\frac43})\Big]^{-\frac32}ac\Big(b+\varepsilon^{\frac3{4}}(av_1+cv_2)\Big)^{-\frac23}dv_1dv_2\\
&=(t-b)b^{-\frac23}ac\Big[(1+a^{\frac43})(1+c^{\frac43})\Big]^{-\frac32}+O\left(\varepsilon^{\frac3{4}}(a+c)ac\Big[(1+a^{\frac43})(1+c^{\frac43})\Big]^{-\frac32}\right).
\end{align*}

Similar to \eqref{sec3-eq3.11-} and \eqref{sec3-eq3.11}, we can find that
$$\lim_{\varepsilon\to0}\frac1{(\log \frac1\varepsilon)^2}\int_{[0,t\varepsilon^{-\frac3{4}}]^2}\varepsilon^{\frac3{4}}(a+c)ac\Big[(1+a^{\frac43})(1+c^{\frac43})\Big]^{-\frac32}dadc=0$$
and
$$\lim_{\varepsilon\to0}\frac1{(\log \frac1\varepsilon)^2}\int_{[0,t\varepsilon^{-\frac3{4}}]^2}ac\Big[(1+a^{\frac43})(1+c^{\frac43})\Big]^{-\frac32}dadc=\frac9{16}.$$

Thus,
\begin{align*}
\lim_{\varepsilon\to0}\frac{V^{(1)}_3(\varepsilon)}{(\log \frac1\varepsilon)^2}&=\frac{2}{9\pi}\int_0^t\frac9{16}(t-b)b^{-\frac23}db\\
&=\lim_{\varepsilon\to0}\frac{V_3(\varepsilon)}{(\log \frac1\varepsilon)^2}\\
&=\sigma^2.
\end{align*}
\end{proof}

\textbf{Proof of  Theorem  \ref{sec1-th sup}}

By Lemmas \ref{sec3-lem3.4}--\ref{sec3-lem3.5} and
$$\widehat{\alpha}^{'}_{t,\varepsilon}(0)=I_1(f_{1,\varepsilon})+\sum_{q=2}^\infty I_{2q-1}(f_{2q-1,\varepsilon}),$$
we can see
$$\lim_{\varepsilon\to0}\mathbb{E}\Big[\Big|\frac1{\log \frac1{\varepsilon}}\sum_{q=2}^\infty I_{2q-1}(f_{2q-1,\varepsilon})\Big|^2\Big]=0.$$

Since $I_1(f_{1,\varepsilon})$ is Gaussian, then we have,  as $\varepsilon\to0$,
$$
\Big(\log\frac1{\varepsilon}\Big)^{-1}I_1(f_{1,\varepsilon})\overset{law}{\to}N(0,\sigma^2).
$$
Thus,
$$
\Big(\log\frac1{\varepsilon}\Big)^{-1}\widehat{\alpha}^{'}_{t,\varepsilon}(0)\overset{law}{\to}N(0,\sigma^2)
$$
as $\varepsilon\to0$. This completes the proof.
\bigskip

\textbf{Acknowledgement}\\
I would like to sincerely thank my supervisor Professor Fangjun Xu, who has led the way to this work. I'm also grateful to the anonymous referees and the editor for their insightful and valuable comments which have greatly improved the presentation of the paper.
Q. Yu is partially supported by National Natural Science Foundation of China (Grant  No.11871219), ECNU Academic Innovation Promotion Program for Excellent Doctoral Students (Grant No.YBNLTS2019-010) and the Scientific Research Innovation Program for Doctoral Students in the Faculty of Economics and Management (Grant  No. 2018FEM-BCKYB014).

\bigskip

\end{document}